\newcommand{\RR}{\mathbb{R}}
\newcommand{\NN}{\mathbb{N}}
\newcommand{\EE}{\mathbb{E}}
\newcommand{\cA}{\mathcal{A}}
\newcommand{\cC}{\mathscr{C}}
\newcommand{\cG}{\mathcal{G}}
\newcommand{\cP}{\mathcal{P}}
\newcommand{\cM}{\mathcal{M}}
\newcommand{\cV}{\mathcal{V}}
\newcommand{\cE}{\mathcal{E}}
\newcommand{\cB}{\mathcal{B}}
\newcommand{\mf}{\mathfrak{m}}
\newcommand{\gothA}{\mathfrak{A}}
\newcommand{\be}{{\bm{e}}}
\newcommand{\bgamma}{\bm{\gamma}}
\DeclareSymbolFont{matha}{OML}{txmi}{m}{it}
\DeclareMathSymbol{\varv}{\mathord}{matha}{118}
\newcommand{\vertiii}[1]{{\left\vert\kern-0.25ex\left\vert\kern-0.25ex\left\vert #1
    \right\vert\kern-0.25ex\right\vert\kern-0.25ex\right\vert}}
\newcommand{\norm}[2]{ \left \lVert {#1}  \right \rVert_{#2}}
\newcommand{\module}[1]{\left \lvert {#1} \right \rvert}
\DeclareMathOperator{\Lip}{Lip}
\DeclareMathOperator{\diam}{diam}
\DeclareMathOperator{\leb}{\mathscr{L}}
\DeclareMathOperator{\argmax}{argmax}
\DeclareMathOperator{\sgn}{sgn}
\newcommand{\sob}[2]{W^{{#1}, {#2}}({\Gamma})}
\newcommand{\csob}[2]{V^{{#1}, {#2}}({\Gamma})}
\newcommand{\bsob}[3]{V_{#3}^{{#1}, {#2}}({\Gamma})}
\newcommand{\cssob}{V^{{1}, {2}}({\Gamma})}
\newcommand{\bssob}{V_{\bgamma}^{{1}, {2}}({\Gamma})}
\newcommand{\Lx}[1]{L^{#1}(\Gamma)}
\DeclareMathOperator{\dw}{{\bf d}_1}
\newcommand{\half}{\frac 1 2}
\renewcommand{\eqref}[1]{\cref{#1}}
\theoremstyle{plain}
\newtheorem{thm}{Theorem}[section]
\newtheorem{lem}[thm]{Lemma}
\newtheorem{prop}[thm]{Proposition}
\theoremstyle{definition}
\newtheorem{defi}[thm]{Definition}
\theoremstyle{remark}
\newtheorem{rem}[thm]{Remark}
\newtheorem{ex}[thm]{Example}
\crefname{thm}{Theorem}{Theorems}
\crefname{cor}{Corollary}{Corollaries}
\crefname{lem}{Lemma}{Lemmata}
\crefname{prop}{Proposition}{Propositions}
\crefname{def}{Definition}{Definitions}
\crefname{rem}{Remark}{Remarks}
\newcounter{assumptions}
\title{Stationary Mean Field Games on networks with sticky transition conditions\footnote{Partially supported by  INdAM-GNAMPA and PRIN-PNRR P20225SP98 ``Some mathematical approaches to climate change and its impacts''.  J.B. was partially supported by the ANR (Agence Nationale de la Recherche) through the
COSS project ANR-22-CE40-0010 and the Centre Henri Lebesgue ANR-11-LABX-0020-01 and by Rennes Métropole through the Collège doctoral de Bretagne. This work was conducted while J.B. was visiting  Sapienza Università di Roma. He
wishes to thank the university for its hospitality.}}
\author{Jules Berry\footnote{Univ Rennes, INSA, CNRS, IRMAR - UMR 6625, Rennes F-35000, France ({\fontfamily{cmtt}\selectfont jules.berry@insa-rennes.fr}).} \and Fabio Camilli\footnote{Dip. di Ingegneria e Geologia, Universit\`a degli Studi ``G. d'Annunzio" Chieti-Pescara, viale Pindaro 42, 65127 Pescara (Italy) ({\fontfamily{cmtt}\selectfont fabio.camilli@unich.it}).}}
\date{}
\begin{document}
\maketitle

\begin{abstract}
We study stochastic Mean Field Games on networks with sticky transition conditions. In this setting, the diffusion process governing the agent's dynamics can spend finite time both in the interior of the edges and at the vertices. The corresponding generator is subject to limitations concerning second-order derivatives  and the invariant measure breaks down into a combination of an absolutely continuous measure within the edges and a sum of Dirac measures positioned at the vertices. Additionally, the value function, solution to the Hamilton-Jacobi-Bellman equation, satisfies generalized Kirchhoff conditions at the vertices.
\end{abstract}

\textbf{Keywords:} Mean Field Games, networks, Markov processes,  sticky points.\\[4pt]

\textbf{AMS subject classification:} 35R02, 49N80, 91A16.

\justify
\section{Introduction}
The theory of Mean Field Games (MFG in short), introduced in  \cite{HCM,LL2007}, considers differential games as the number of agents approaches infinity.  The adaptation of the MFG theory   to network  has been considered  in \cite{ADLT2019,ADLT2020, CM2024, CM2016,}. It revolves around a PDE system comprising two differential equations: a Hamilton-Jacobi-Bellman equation and a Fokker-Planck equation, each defined within the network's edges. These equations are complemented with appropriate boundary conditions, as well as initial-final conditions for evolutive problems, and transition conditions at the vertices. The  transition conditions are pivotal both in modeling, describing how agents behave upon reaching a vertex, and theoretically, ensuring the problem possesses properties  that allow it to be studied, for example the Maximum Principle for the Hamilton-Jacobi-Bellman equation.\par
In previous research on MFG on networks, the vertex transition condition for the Hamilton-Jacobi-Bellman equation typically adheres to the classical Kirchhoff condition. This condition implies that  the  agent spend  zero time at the vertices and enter  one of the adjacent edges with a probability determined by specific coefficients, as dictated by the associated Markov process. In terms of duality, the vertex transition condition for the Fokker-Planck equation aligns with flux conservation principles. These conditions collectively indicate that the distribution of agents remains absolutely continuous, with no mass concentration occurring at the vertices. Nevertheless, in various physical models implemented on networks, such as those dealing with traffic flow or data transmission, congestion at vertices is a common occurrence. This congestion arises from the delay in distributing agents, whether they be vehicles or data packets, along the corresponding incident edges.  Consequently, it becomes important to develop a mathematical framework that can capture and describe these congestion phenomena.\par
In the unidimensional setting, sticky diffusion processes were first considered by Feller \cite{F1952,F1954} through the semigroup approach by adding a second order term in the boundary condition defining the domain of the infinitesimal generator. The name sticky is motivated by the fact that these processes have a strictly positive occupation time on the boundary. A fundamental example is the sticky (reflected) Brownian motion. It was proved in \cite{IM1963} that it can be obtained by slowing down a reflected Brownian motion. In addition, a characterization in terms of a stochastic differential equation was obtained in \cite{EP2014,B2014}. These results were then extended to general diffusion processes in \cite{SS2017}. Sticky processes have for instance found applications to operations research \cite{HL1981}, finance \cite{KKR2007} and epidemics models \cite{CF2012}. Recently, their analogues on networks have been studied in \cite{KPS2012a,KPS2012b,KPS2012c,BD2024,BC2024}.\par
 In our investigation, we explore MFG  on networks governed by such sticky diffusion processes. Unlike the nonsticky diffusion processes governed by Kirchhoff  conditions, stickiness leads to an accumulation of agents at the vertices and their distribution across the network splits into two distinct components. One component is absolutely continuous along the arcs, while the other concentrates at the vertices. We demonstrate that the stationary distribution of the sticky process can be characterized similarly to the problem discussed in \cite{ADLT2019} for the absolute continuous component inside the edges, while the singular part is  represented as a sum of Dirac masses at the vertices with coefficients proportional to the stickiness of the process. A similar decomposition has also been observed  in \cite{BOURABE2019,SANTA2024,ACG2021}.

Once the characterization of the stationary distribution is obtained,   we turn to the study the well-posedness of Hamilton-Jacobi equations associated to discounted infinite horizon and long-term averaged optimal control problems of the sticky diffusion process.
In order to identify the appropriate transmission condition to impose at vertices, we establish a verification theorem for the  Hamilton-Jacobi equation associated with a discounted infinite-horizon optimal control problem with bounded controls, utilizing the Itô formula as described in \cite{SS2017,BC2024}. The correct condition turns out to be a generalized Kirchhoff condition incorporating the stickiness coefficient and the cost incurred at the vertices. Differential equations on networks with generalized transition conditions at the vertices have been also studied in \cite{vBN1996,MR2007,O2021}.

 Subsequently, we demonstrate the existence of solutions for the stationary Mean Field Game system through a fixed point argument, supplemented by establishing uniqueness leveraging a classical monotonicity assumption on the cost. In examining the Mean Field Games system, a pivotal aspect is the duality between the linearized Hamilton-Jacobi-Bellman equation and the Fokker-Planck equation, which is satisfied also in this case.\par
The paper is structured as follows. Section \ref{sec:2} provides basic definitions for functions on networks and related functional spaces. Section \ref{sec:3} examines diffusion processes on networks and the Fokker-Planck equation for the corresponding stationary distribution. In Section \ref{sec:4}, we focus on the Hamilton-Jacobi-Bellman   equation associated with discount and ergodic control problems. Lastly, Section \ref{sec:5} presents the proofs of the main results concerning the existence and uniqueness for the Mean Field Game system.
\section{Networks and functions spaces}\label{sec:2}
This section presents the basic notions and properties of a network  and    some function  spaces associated to it    \cite{ADLT2019,ADLT2020,CM2016}.
We consider a finite set $\cV$ of points in $\RR^d$ which will be the set of vertices of the network. We   then consider a subset $P \subset \cV \times \cV$ satisfying
\begin{itemize}
 \item if $\varv \in \cV$ then $(\varv,\varv) \notin P$,
 \item for every $\varv_1 \in \cV$, there exists a $\varv_2 \in \cV$ such that either $(\varv_1, \varv_2) \in P$ or $(\varv_2, \varv_1) \in P$.
\end{itemize}
We say that $\varv$ is a boundary vertex if there a unique $\bar \varv\in \cV$ for which either $(\varv, \bar \varv)$ or $(\bar\varv,  \varv)$ belongs to $P$ and we denote with $\partial \cV$ the set of the boundary vertices. 
We define the set of edges of the network $\cE$ as the set of all segments between the two points of a pair in $P$
\begin{equation}\label{eq:edges}
 \cE = \big \{ \{ \theta \varv_1 + (1 - \theta)\varv_2 : \ \theta \in [0,1] \} : \ (\varv_1,\varv_2) \in P \big \}.
\end{equation}
Since $\cE$ is a finite collection of subsets of $\RR^d$, we can index it by a finite set $\cA$ so that
$
 \cE = \{\Gamma_\alpha \subset \RR^d : \alpha \in \cA \},
$
each $\Gamma_\alpha$ being one the the segments defined  in \eqref{eq:edges}. Finally  the network is given by $\Gamma = \cup_{\alpha \in \cA} \Gamma_\alpha$
and we endow it with the topology induced by the geodesic distance on $\Gamma$.   In all of this work we assume that the network $\Gamma$ is connected. \\
For   $\varv \in \cV$, we define $\cA_\varv = \left \{ \alpha \in \cA \ : \varv \in \Gamma_\alpha \right \}$, that is the set of all indices $\alpha \in \cA$ such that the vertex $\varv$ belongs to the edge $\Gamma_\alpha$. We assume that, for each pair $\alpha, \beta \in \cA$  with $\alpha \neq \beta$, one has $\Gamma_\alpha \cap \Gamma_\beta = \{\varv \}$    whenever $\alpha, \beta \in \cA_\varv$ and $\Gamma_\alpha \cap \Gamma_\beta = \varnothing$ otherwise.
Let $\Gamma_\alpha$ be the edge with vertices $\varv_i$ and $\varv_j$. We denote by $L_\alpha \in (0,\infty)$ the length of the  edge and we define a unit vector
 $\be_\alpha = ( \varv_j - \varv_i)/ L_\alpha$.
Then   $\Gamma_\alpha$  admits a parametrization $\pi_\alpha \colon [0, L_\alpha] \to \Gamma_\alpha$ defined by $\pi_\alpha(s) = \varv_i + s \be_\alpha$.\\
To a function $u : \Gamma \to \RR$, we associate    the function $u_\alpha : (0,L_\alpha) \to \RR$, $\alpha \in \cA$, defined by
\[
 u_\alpha (y) =  ( u \circ \pi_\alpha ) (y), \quad \textnormal{for every } y \in (0,L_\alpha).
\]
For $x \in \Gamma_\alpha$, we set
\begin{equation}\label{eq:continuous_extension_network}
 u_{| \Gamma_\alpha}(x) = \begin{cases}
                          u_\alpha \circ \pi_\alpha^{-1}(x) \quad & \textnormal{for } x \in \Gamma_\alpha \setminus \cV, \\
                          \lim_{y \to 0^+} u_\alpha(y) \quad & \textnormal{if } x = \pi_\alpha (0), \\
                          \lim_{y \to L_\alpha^- } u_\alpha(y) \quad & \textnormal{if } x = \pi_\alpha(L_\alpha),
                         \end{cases}
\end{equation}
if the previous limits exist.\\
For every Borel set $ A \in \cB(\Gamma)$, we define the  Lebesgue  measure  of $A$   by
$
 \leb(A) = \sum_{\alpha \in \cA} \pi_\alpha \# \leb_\alpha (A \cap \Gamma_\alpha)
$
where $\leb_\alpha$ is the usual one dimensional Lebesgue measure on $[0,L_\alpha]$. Clearly  $\leb(\Gamma) = \sum_{\alpha \in \cA} L_\alpha$. For a $\leb$-integrable function $f : \Gamma \to \RR$ we then have
\[
 \int_\Gamma f(x)\ dx :=\int_\Gamma f(x) \leb(dx) = \sum_{\alpha \in \cA} \int_{\Gamma_\alpha} f(x)  \pi_\alpha \# \leb_\alpha(dx) = \sum_{\alpha \in \cA} \int_0^{L_\alpha} f_\alpha(y)\ dy.
\]
We denote by $\cC(\Gamma)$ the space of continuous real valued functions on $\Gamma$, which is a Banach space when equipped with the norm $\norm{u}{\cC(\Gamma)} = \sup_{x \in \Gamma} \module{u(x)}$. It is also convenient to allow functions to be discontinuous at the junctions but continuous in each edge. We first define
\[
 \mathcal{PC}(\Gamma) = \left \{ u : \Gamma \to \RR:\, u_\alpha \in \cC(0,L_\alpha)
\textnormal{ and the limits in \eqref{eq:continuous_extension_network} exist}
\textnormal{ for each } \alpha \in \cA \right \}.
\]
 endowed with the norm of uniform convergence on each edge
\[
\norm{u}{\mathcal{PC} (\Gamma)} = \max_{\alpha \in \cA} \norm{u_\alpha}{L^\infty((0,L_\alpha))},
\]
 We then consider the following equivalence relation
\[
\forall u,v \in  \mathcal{PC} (\Gamma) \quad \left ( u \sim v \right ) \Leftrightarrow \left (u_{\alpha}(x) = v_{\alpha}(x), \ \forall x \in (0,L_\alpha),\  \forall \alpha \in \cA \right ),
\]
and finally we define  $PC(\Gamma) =  \mathcal{PC}(\Gamma) / \sim$.
We observe that $PC(\Gamma)$ with the induced norm is a Banach space. We also denote by $PC^{\varsigma}(\Gamma)$, for $\varsigma \in (0,1]$ the subspace of all $u \in PC(\Gamma)$ such that $u_\alpha \in \cC^\varsigma([0,L_\alpha])$ for all $\alpha \in \cA$,   i.e. such that
\[
 \sup_{\substack{x,y \in [0,L_\alpha] \\ x \neq y}} \frac{\module{u_\alpha(x) - u_\alpha(y)}}{\module{x-y}^\varsigma} < \infty \quad \textnormal{for all } \alpha \in \cA.
\]
For   $x\in \Gamma_\alpha$,  $\alpha \in \cA$,  we   define the derivative of $u$ at $x \in \Gamma_\alpha \setminus \cV$ as the directional derivative
\begin{align*}
\partial_\alpha u(x) & = \lim_{h \to 0} \frac{u(x+ h \be_\alpha) - u(x)}{h} = \lim_{h \to 0} \frac{u_\alpha(\pi_\alpha^{-1}(x + h \be_\alpha)) - u_\alpha(\pi_\alpha^{-1}(x))}{h} .
\end{align*}
 Higher order derivatives  are defined in the same way.  At the vertices, we   define $\partial_\alpha u(\varv )$ as the {outward directional derivative} of $u$ at $\varv  \in \cV$ for each $\alpha \in \cA_\varv$, i.e.
\[
 \partial_\alpha u(\varv) = \begin{cases}
                               \lim_{h \downarrow 0} \frac{u_\alpha(0) - u_\alpha(h)}{h} \quad &\textnormal{if } \varv = \pi_\alpha(0), \\
                               \lim_{h \downarrow 0} \frac{u_\alpha(L_\alpha) - u_\alpha(L_\alpha - h)}{h} \quad &\textnormal{if } \varv = \pi_\alpha(L_\alpha),
                              \end{cases}
\]
when the above limits exist. Notice that if we define
\begin{equation}\label{eq:def_n}
 n_{\varv,\alpha} = \begin{cases}
                 1 \quad &\textnormal{if } \varv = \pi_\alpha(L_\alpha), \\
                 -1 \quad &\textnormal{if } \varv = \pi_\alpha(0),
                \end{cases}
\end{equation}
then $\partial_\alpha u(\varv) = n_{\varv,\alpha} \partial u_\alpha(\pi_\alpha^{-1}(\varv))$.\\
For every integer $k \geq 1$, the function space
\[
 \cC^k(\Gamma) = \left \{ u \in \cC(\Gamma) : \ u_\alpha \in \cC^k([0,L_\alpha]), \ \forall \alpha \in \cA \right \},
\]
equipped with the norm
 $
  \norm{u}{\cC^k(\Gamma)} = \sum_{\alpha \in \cA} \sum_{0 \leq j \leq k} \norm{\partial^j u_\alpha}{L^\infty(0,L_\alpha)}
$
is a Banach space. We will also need to consider \emph{H\"older} continuous functions on $\Gamma$. For each $k \in \NN$ and $\varsigma\in (0,1]$  we define
 \begin{equation}\label{eq:def_holder}
  \cC^{k,\varsigma}(\Gamma) = \big \{ u \in \cC^k(\Gamma) : \ \norm{u}{\cC^{k,\varsigma}(\Gamma)} < \infty \big \},
 \end{equation}
where
\[
 \norm{u}{\cC^{k,\varsigma}(\Gamma)} = \norm{u}{\cC^k(\Gamma)} + \max_{\alpha \in \cA} \sup_{\substack{x, y \in [0,L_\alpha] \\ x \neq y}} \frac{\module{\partial^k u_\alpha(x) - \partial^k u_\alpha(y)}}{\module{x - y}^\varsigma}.
\]
For $p \in [1, \infty]$, we denote by $L^p(\Gamma) := L^p(\Gamma, \cB(\Gamma), \leb)$ and we notice that
\[
 L^p(\Gamma) = \left \{ u : \Gamma \to \RR :\ u_\alpha \in L^p(0,L_\alpha) \textnormal{ for every } \alpha \in \cA \right \}.
\]
with the equivalent norm  $\norm{u}{L^p(\Gamma)} = \left ( \sum_{\alpha \in \cA} \norm{u_\alpha}{L^p(0,L_\alpha)}^p \right )^{\frac{1}{p}}$ for $1 \leq p < \infty$ and $\norm{u}{L^\infty(\Gamma)} = \max_{\alpha \in \cA} \norm{u_\alpha}{L^\infty(0,L_\alpha)}$.
 For any integer $k \geq 1$ and every $p \in [1,\infty]$, we define the Sobolev space
 \[
  \sob{k}{p} = \big \{ u \in L^p(\Gamma) :\ u_\alpha \in W^{k,p}((0,L_\alpha)),\ \forall \alpha \in \cA \big \}.
 \]
endowed  with the norm
$ \norm{u}{W^{k,p}} =   ( \norm{u}{L^p(\Gamma)}^p + \sum_{j = 1}^k \norm{\partial^j u}{L^p(\Gamma)}^p   )^{\frac{1}{p}}$  for $1 \leq p < \infty$   and
$ \norm{u}{W^{k,\infty}} = \norm{u}{L^\infty(\Gamma)} + \sum_{j = 1}^k \norm{\partial^j u}{L^\infty(\Gamma)}$.\\
We also consider Sobolev space with prescribed value at the vertices.  For a given set of strictly positive real numbers
$\bgamma = \left \{ \gamma_{\varv,\alpha} \in (0, + \infty) : \varv \in \cV, \alpha \in \cA_\varv\right \}$, we define
 \[
  \bsob{k}{p}{\bgamma} = \big \{ u \in \sob{k}{p} \ : \frac{u_{|\Gamma_\alpha}(\varv)}{\gamma_{\varv,\alpha}} = \frac{u_{|\Gamma_\beta}(\varv)}{\gamma_{\varv,\beta}},\ \forall \varv \in \cV,\ \forall \alpha, \beta \in \cA_\varv \big \}.
 \]
 Moreover if $\gamma_{\varv,\alpha} = 1$ for every $\alpha \in \cA_\varv$ and each $\varv \in \cV$ we simply write $\csob{k}{p}$ and one has the following identification
 $
 \csob{k}{p} = \cC(\Gamma) \cap \sob{k}{p}.
$
These spaces will be alternatively used as the space in which we look for a solution of a given PDE on $\Gamma$ or as a set of test functions. For any $k \geq 1$ and every $p \in [1, \infty]$, the spaces $\sob{k}{p}$ and $\bsob{k}{p}{\bgamma}$ are Banach space and $\sob{k}{2}$ and $\bsob{k}{2}{\bgamma}$ are Hilbert space with obvious inner product.\\
We finally denote by $\cP_1(\Gamma)$ the space of  Borel probability  measures on $\Gamma$ endowed with the Kantorovich-Rubinstein metric $\dw$  (we refer the reader to \cite[Chapter 6]{V2009} for precise definitions).
  \section{Diffusion process on networks with sticky transition conditions}\label{sec:3}
We introduce diffusion processes on networks with sticky transition conditions, or transition conditions with spatial delay, at the vertices. We consider the linear differential operator $\cG$  on $\Gamma$  defined on each edge $\Gamma_\alpha$ by
\begin{equation}\label{eq:s1_eq1}
    \cG_\alpha f(x)=\mu_\alpha \partial^2  f(x)+b_\alpha(x)\partial  f(x)\qquad \textnormal{for all } x \in \Gamma_\alpha \setminus \cV,\, \alpha \in \cA, \, f \in D(\cG),
\end{equation}
with domain
\begin{equation}\label{eq:s1_eq1a}
D(\cG) = \left \{f \in \cC^2(\Gamma) : \,\begin{array}{l}\cG f\in \cC(\Gamma),\,\eta_\varv \cG f(\varv) + \sum_{\alpha \in \cA_\varv} \mu_\alpha \gamma_{\varv,\alpha} \partial_\alpha f(\varv) = 0 \\[4pt] \textnormal{ for all }  \varv \in \cV \setminus \partial \cV,
 \partial_\alpha f(\varv) = 0\,   \textnormal{ for all } \varv \in \partial \cV,\,\alpha\in\cA_\varv \end{array} \right \},
\end{equation}
where $b \in PC(\Gamma)$, $\eta_\varv\ge 0$,  $\mu_\alpha, \gamma_{\varv,\alpha}>0$, $\alpha\in\cA$, and $\sum_{\alpha \in \cA_\varv} \mu_\alpha \gamma_{\varv,\alpha} =1$ for all $\varv\in\cV\setminus \partial \cV$ (note that $\cG$ does not depend on the choice of $b$ in the corresponding equivalence class).
According to \cite[Theorem 3.1]{FW1993}, there exists a Feller process $X$, with continuous paths, defined on $\Gamma$ with generator $\cG$.

This process behaves like a standard unidimensional diffusion while in the interior of the edges and, upon reaching an interior vertex $\varv \in \cV \setminus \partial \cV$, is randomly dispatched to another edge. The latter process involves two parameters determined in the domain \eqref{eq:s1_eq1a}: stickiness $\eta_\varv$ which, roughly speaking, prescribes the average time spent by the process at $\varv$  and the probability to enter a given edge $\alpha \in \cA_\varv$, which is given by the quantity $\mu_\alpha \gamma_{\varv, \alpha}$. Note also that the Neumann boundary condition imposed on exterior vertices in \eqref{eq:s1_eq1a} induces an instantaneous reflection of the process. In full generality, it is also possible to consider sticky behavior on exterior vertices.

The process can be expressed as $X(t) = (x(t), \alpha(t)) \in \mathbb{R}_+ \times \mathcal{A}$, where $ \alpha$ represents the current edge on which $X$ is located, and \( x \) specifies its position along this edge. In the non-sticky case ($\eta_\varv = 0$), it was shown in \cite{FS2000} that the pair $(x, \alpha)$ satisfies a stochastic differential equation with random coefficients. For the sticky case ($ \eta_\varv > 0$), Itô and McKean \cite{IM1963} demonstrated that the process's paths can be derived from those of the non-sticky case through suitable random time changes. For the sticky Brownian motion on networks, we refer to \cite{KPS2012a, KPS2012b, KPS2012c, BD2024}.

For general diffusion processes, let us first describe the setting considered in \cite{SS2017}, where a path-wise characterization of the process associated to $\cG$  has been obtained for the case of a simple graph $\Gamma^0$ composed of one vertex $\cV = \{0\}$ and two semi-infinite edges $\mathcal{E} = \{\Gamma^0_+, \Gamma^0_- \}$,  where $\Gamma^0_+ = [0, + \infty)$ and $\Gamma^0_- = (-\infty, 0]$.  In this case, we can rewrite  the  domain  of $\cG$ as
  \begin{equation}
  	D(\cG) = \Big\{f \in \cC^2(\Gamma^0): \,\cG f\in \cC(\Gamma^0),\eta \cG(0)+p_+\partial_+ f(0 )+p_-\partial_- f(0 )=0\Big\}\label{eq:s1_eq3}
 \end{equation}
 where    $p_\pm=\mu_\pm\gamma_{0,\pm}$ and $\partial_\pm$ denote the outward directional derivatives in $x=0$. The coefficient  $p_\pm$ are the probability that the process, being at $x=0$, enters respectively in $\Gamma^0_\pm$. By making use of the It\^o-McKean technique, the authors are able to prove the following.

\begin{thm}[{\cite[Theorems 3.2 and 3.3]{SS2017}}]
 	Assume that $\sigma_\pm >0$ and $b_\pm$ is uniformly Lipschitz continuous in $\Gamma_\pm$.
 	\begin{itemize}
 		\item[(i)] Let $X(t)$ be a diffusion associated with the infinitesimal operator $\cG$ defined in \eqref{eq:s1_eq1},\eqref{eq:s1_eq3}, started at $x \in \Gamma^0$. Then there exists a Brownian motion $W(t)$ such that the process $X$ solves the SDE
 		\begin{equation}\label{eq:s1_eq4}
          \begin{cases}
          dX(t) &= (b(X(t))dt + \sigma dW(t))\mathbbm{1}_{\{X(t)\not =0\}} + (p_+-p_-)L^X(t,0), \\
          X(0) &= x,
        \end{cases}
     \end{equation}
        where $b(x)=b_{\pm}(x)$  and  $\sigma= \sqrt{2\mu_\pm}$ for $x\in\Gamma^0_\pm$ and $L^X(\cdot,0)$ is the local-time of $X$ at $0$.
 		\item[(ii)] The pair $(X,L^X(\cdot,0))$ solving the SDE \eqref{eq:s1_eq4}  is unique in law under the additional constraint
 		\begin{equation}\label{eq:s1_eq5}
          \eta L^X(t,0)=     \int_0^t \mathbbm{1}_{\{X(s) = 0\}}ds.
        \end{equation}
 	\end{itemize}
 \end{thm}
  Observe that solutions to   \eqref{eq:s1_eq4} are not unique.  For example, the SDE is satisfied by an undelayed process ($\eta = 0$) since this process spends almost no time at $0$. Uniqueness, in weak sense, is recovered adding to the process the corresponding local time at $0$. Notice also that \eqref{eq:s1_eq5} implies   that the occupation time of the sticky process at $0$ is positive and we recover that it is null in the non sticky case by formally taking the limit $\eta \to 0$.

The extension of the previous results to the case of a star graph is considered in \cite{BC2024}. Since the behavior of the process is purely local, the results from \cite{BC2024} can be generalized to general networks and we summarize in the following theorem the properties that are relevant to this work.

 \begin{thm}\label{thm:ito_formula}
  Let $X$ be the Feller process on the network $\Gamma$ generated by \eqref{eq:s1_eq1}, \eqref{eq:s1_eq1a}. Then
  \begin{enumerate}[label=\rm{(\roman*)}]
   \item  there exists a one-dimensional Brownian motion $W$ such that, for every $T>0$, $f \in \cC([0,T] \times \Gamma)$ with $\partial_t f \in \cC([0,T] \times \Gamma)$ and $\partial_x f, \, \partial_x^2 f \in \cC([0,T],PC(\Gamma))$, and  $0 < t \leq T$, we have
   \begin{align*}
  f(t,X(t)) & = f(0,X(0)) + \int_0^t \left ( \partial_t f(s,X(s)) +  \cG_\alpha f(s,X(s)) \right ) \mathbbm{1}_{\{ X(s) \in\Gamma_\alpha\setminus\cV \}}\, ds \\ & \quad + 2 \int_0^t \sqrt \mu_\alpha \partial_x f(s,X(s)) \mathbbm{1}_{\{ X(s) \in\Gamma_\alpha\setminus\cV \}} d W_s \\
   & \quad + \sum_{\varv \in \cV} \int_0^t \left ( \eta_\varv \partial_t f(s,\varv) - \sum_{\alpha \in \cA_\varv} \mu_\alpha \gamma_{\varv,\alpha} \partial_\alpha f(s,\varv) \right ) d L^X(s,\varv),
 \end{align*}
 where, for each $\varv \in \cV$, $L^X(\cdot,\varv)$ is a finite variation process and we use the convention that $\eta_\varv = 0$ if $\varv \in   \partial \cV$;

\item for every bounded measurable function $g \colon \RR_+ \to \RR$ we have
\begin{equation*}
 \int_0^t g(s) \mathbbm{1}_{\{X(s) = \varv\}} \, ds = \eta_{\varv} \int_0^t g(s) dL^X(s,\varv).
\end{equation*}

\end{enumerate}

 \end{thm}

We come back to the study of the Markov process associated to \eqref{eq:s1_eq1}-\eqref{eq:s1_eq1a} and we are   interested in characterizing the corresponding stationary distribution, i.e. a probability measure $ \mf \in \cP_1(\Gamma)$ such that (see \cite[Chapter 4]{EK1986} for instance)
\begin{equation*}
	\int \cG f(x)  \mf(dx) = 0 \quad \textnormal{for all } f \in D(\cG).
\end{equation*}
This characterization was previously obtained in \cite{ACG2021}. We include its proof following a slightly different approach for the sake of completeness. We will show that the measure $\mf$ splits into an absolutely continuous part and a sum of Dirac masses concentrated at the vertices, i.e.
\begin{equation}\label{eq:mu}
	\mf = m \leb + \sum_{\varv \in \cV \setminus \partial \cV} \eta_\varv T_\varv[m] \delta_{\varv},
\end{equation}
where the density $m \in \bssob$
is a weak solution (see \cref{defi:weak_FP} below) to
\begin{equation}\label{eq:FP}
	\begin{cases}
		- \mu_\alpha \partial^2 m(x) - \partial \left( b(x) m(x) \right ) = 0 \quad & \textnormal{for all } x \in \Gamma_\alpha,\, \alpha \in \cA, \\[4pt]
		\frac{m_{|\Gamma_\alpha}(\varv)}{\gamma_{\varv,\alpha}} = \frac{m_{|\Gamma_\beta}(\varv)}{\gamma_{\varv,\beta}} =: T_\varv[m] \quad & \textnormal{for all } \alpha, \beta \in \cA_\varv, \, \varv \in \cV \setminus \partial \cV, \\[4pt]
		\sum_{\alpha \in \cA_\varv} \mu_\alpha \partial_\alpha m_{|\Gamma_\alpha}(\varv) + n_{\varv,\alpha}  m_{|\Gamma_\alpha}(\varv)  b_{|\Gamma_\alpha}(\varv) = 0 \quad &  \textnormal{for all } \varv \in \cV, \\[4pt]
		m \geq 0, \, 1 \geq \int_\Gamma m\, dx = 1 -  \sum_{\varv \in \cV \setminus{\partial \cV}} \eta_\varv T_\varv[m] \geq 0.
	\end{cases}
\end{equation}
Note that the second line of the previous system define, up to the factors $\eta_\varv$, the coefficients of the Dirac masses at the vertices.
\begin{defi}\label{defi:weak_FP}
A weak solution to \eqref{eq:FP}  is a function $m\in \bsob{1}{2}{\bgamma}$ such that
\begin{equation}\label{eq:FP_weak}
	\int_\Gamma \mu \partial m \partial v + b m \partial v \ dx = 0 \quad \textnormal{for every } v \in \cssob
\end{equation}	
and $m \geq 0$, $ 1 \geq \int_\Gamma m\, dx = 1 -  \sum_{\varv \in \cV \setminus{\partial \cV}} \eta_\varv T_\varv[m] \geq 0$.
\end{defi}

We refer the reader to \cite{ADLT2019} for a justification of the weak formulation of \eqref{eq:FP}.   Recall the notation $\bgamma := \left \{ \gamma_{\varv, \alpha} : \alpha \in \cA_{\varv}, \varv \in \cV \setminus \partial \cV  \right \}$.
\begin{lem}\label{lem:FP_estimate}
	Assume that $b \in \Lx{\infty}$ and let $m \in \bssob$ satisfy \eqref{eq:FP_weak}. Then there exists a positive constant $C = C(\mu,\norm{b}{L^\infty}, \bgamma, \Gamma)$ such that
	\[
	\norm{m}{\bssob} \leq C \norm{m}{L^1}.
	\]
\end{lem}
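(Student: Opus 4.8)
The plan is to localise the estimate to the individual edges, exploit the one-dimensional structure of \eqref{eq:FP_weak} there, and close with an interpolation inequality used at a sufficiently small scale; it is worth noting that neither the vertex transition conditions nor the positivity and mass constraints on $m$ will be used, so the bound in fact holds for any $m\in W^{1,2}(\Gamma)$ solving the edgewise equations. First I would test \eqref{eq:FP_weak} with functions $v\in\cssob$ that are smooth and supported in the interior of a single edge $\Gamma_\alpha$ (these are admissible since they vanish at every vertex); this shows that the $L^2(0,L_\alpha)$ function $\mu_\alpha\partial m_\alpha+b_\alpha m_\alpha$ has vanishing distributional derivative on $(0,L_\alpha)$, hence equals some constant $c_\alpha\in\RR$ almost everywhere. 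Since $m\in\bssob\subset W^{1,2}(\Gamma)$, each $m_\alpha\in W^{1,2}(0,L_\alpha)$ has a representative in $\cC([0,L_\alpha])$ by the one-dimensional Sobolev embedding, so $b_\alpha m_\alpha\in L^\infty(0,L_\alpha)$ and consequently $\partial m_\alpha=\mu_\alpha^{-1}(c_\alpha-b_\alpha m_\alpha)\in L^\infty(0,L_\alpha)$; thus $m_\alpha$ is Lipschitz, with $\norm{\partial m_\alpha}{L^\infty(0,L_\alpha)}$ bounded by a constant depending on $\mu$ and $\norm{b}{L^\infty}$ times $|c_\alpha|+\norm{m_\alpha}{L^\infty(0,L_\alpha)}$.

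Next I would control the flux constant $c_\alpha$ in terms of $m$ itself. Integrating $\mu_\alpha\partial m_\alpha+b_\alpha m_\alpha=c_\alpha$ over $(0,L_\alpha)$ gives
\[
 c_\alpha L_\alpha=\mu_\alpha\big(m_\alpha(L_\alpha)-m_\alpha(0)\big)+\int_0^{L_\alpha}b_\alpha m_\alpha\,dy,
\]
so $|c_\alpha|\le C\big(\norm{m_\alpha}{L^\infty(0,L_\alpha)}+\norm{m_\alpha}{L^1(0,L_\alpha)}\big)$, and combining with the previous bound,
\[
 \norm{\partial m_\alpha}{L^\infty(0,L_\alpha)}\le C_1\big(\norm{m_\alpha}{L^\infty(0,L_\alpha)}+\norm{m_\alpha}{L^1(0,L_\alpha)}\big),
\]
where $C_1=C_1(\mu,\norm{b}{L^\infty},\Gamma)$.

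Finally, since $m_\alpha$ is continuous on the compact interval $[0,L_\alpha]$ it attains its maximum at some $y^\ast$, and for any $\ell\in(0,L_\alpha]$, taking a subinterval $J\subset[0,L_\alpha]$ of length $\ell$ containing $y^\ast$ and using the Lipschitz bound on $J$,
\[
 \norm{m_\alpha}{L^\infty(0,L_\alpha)}\le \ell^{-1}\norm{m_\alpha}{L^1(0,L_\alpha)}+\ell\,\norm{\partial m_\alpha}{L^\infty(0,L_\alpha)}.
\]
Inserting the bound on $\norm{\partial m_\alpha}{L^\infty(0,L_\alpha)}$ and choosing $\ell:=\min\{L_\alpha,(2C_1)^{-1}\}$ absorbs the gradient term and gives $\norm{m_\alpha}{L^\infty(0,L_\alpha)}\le C_2\norm{m_\alpha}{L^1(0,L_\alpha)}$; back-substitution then yields $\norm{\partial m_\alpha}{L^\infty(0,L_\alpha)}\le C_3\norm{m_\alpha}{L^1(0,L_\alpha)}$, with $C_2,C_3$ depending only on $\mu,\norm{b}{L^\infty},\Gamma$. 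Hence $\norm{m_\alpha}{W^{1,2}(0,L_\alpha)}\le C_4\norm{m_\alpha}{L^1(0,L_\alpha)}$, and summing over the finitely many $\alpha\in\cA$ (using $\big(\sum_\alpha a_\alpha^2\big)^{1/2}\le\sum_\alpha a_\alpha$ for nonnegative $a_\alpha$) produces $\norm{m}{\bssob}\le C\norm{m}{L^1}$.

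The one genuine obstacle is this last step: used on the whole edge $\Gamma_\alpha$ the interpolation inequality would leave the factor $L_\alpha C_1$ in front of $\norm{m_\alpha}{L^\infty(0,L_\alpha)}$, which need not be smaller than $1$; working on a subinterval of adjustable length makes the absorption unconditional and keeps the final constant controlled purely by $\mu$, $\norm{b}{L^\infty}$ and $\Gamma$ (in particular independent of $\bgamma$). Everything else is routine one-dimensional analysis.
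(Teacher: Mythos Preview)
Your argument is correct and genuinely different from the paper's. The paper proceeds by a global energy method: it builds a piecewise affine function $\psi$ with $\psi_{|\Gamma_\alpha}(\varv)=\gamma_{\varv,\alpha}$, so that $m\psi\in\cssob$ is an admissible test function in \eqref{eq:FP_weak}; this yields $(\min\bgamma)\int_\Gamma\mu|\partial m|^2\le C(\varepsilon\norm{\partial m}{L^2}^2+C_\varepsilon\norm{m}{L^2}^2)$, and the loop is closed via $\norm{m}{L^2}\le\norm{m}{L^\infty}^{1/2}\norm{m}{L^1}^{1/2}$ together with the embedding $W^{1,1}(\Gamma)\hookrightarrow L^\infty(\Gamma)$ and Young's inequality. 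You instead localise: testing with $v\in C_c^\infty(\Gamma_\alpha\setminus\cV)$ extracts the edgewise first integral $\mu_\alpha\partial m_\alpha+b_\alpha m_\alpha=c_\alpha$, which immediately upgrades $\partial m_\alpha$ to $L^\infty$ and lets you run a small-scale interpolation to absorb $\norm{m_\alpha}{L^\infty}$.

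What each approach buys: your route is more elementary, gives the stronger $W^{1,\infty}$ bound edge by edge, and---as you note---never touches the vertex conditions, so the constant is independent of $\bgamma$ and the estimate holds for any $m\in W^{1,2}(\Gamma)$ solving the interior equations. The paper's route, by contrast, stays at the level of the weak formulation and does not exploit the one-dimensional ODE structure; it is the argument one would expect to generalise (to systems, higher dimensions, or settings where no first integral is available), at the price of a constant depending on $\min\bgamma$.
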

\begin{proof}
	We consider the function $\psi \in  PC (\Gamma)$ uniquely determined by $\psi_{|\Gamma_\alpha}(\varv) = \gamma_{\varv, \alpha}$ for every $\alpha \in \cA_\varv$ and $\varv \in \cV \setminus \partial \cV$, $\psi_{\alpha}$ is affine if $\Gamma_\alpha \cap \partial \cV = \varnothing$ and constant otherwise. Then, the function $m\psi$ belongs to $\cssob$. Using $m\psi$ as a test-function in \eqref{eq:FP_weak}, we obtain, for any $\varepsilon > 0$,
	\begin{equation}\label{eq:FP_estimate_1}
		\begin{split}
			(\min \bgamma) \int_\Gamma \mu \module{\partial m}^2\ dx & \leq \int_\Gamma \mu \module{m \partial m \partial \psi} + \module{bm(m \partial \psi + \partial m \psi)}\, dx \\
			& \leq C  \int_\Gamma \module{m \partial m} + \module{m}^2 \, dx \leq C \left (\varepsilon \norm{\partial m}{L^2}^2 + C_\varepsilon \norm{m}{L^2} \right ).
		\end{split}
	\end{equation}
	As a consequence of the continuous embedding $\sob{1}{1} \hookrightarrow \Lx{\infty}$ we have
	\begin{equation}\label{eq:FP_estimate_2}
		\norm{m}{L^2} \leq \norm{m}{L^\infty}^{\half} \norm{m}{L^1}^{\half} \leq C_1 \norm{m}{W^{1,1}}^{\half} \norm{m}{L^1}^{\half} \leq C_2 \left (\norm{\partial m}{L^2} + \norm{m}{L^1} \right)^{\half}\norm{m}{L^1}^{\half}.
	\end{equation}
	Combining \eqref{eq:FP_estimate_1} and \eqref{eq:FP_estimate_2} and using Young's inequality we deduce
	\[
	(\min \bgamma) \underline{\mu} \norm{\partial m}{L^2}^2 \leq 2 \varepsilon \norm{\partial m}{L^2}^2 + \tilde C_\varepsilon \norm{m}{L^1}^2,
	\]
	where $\underline{\mu} = \min_{\alpha \in \cA} \mu_\alpha > 0$. Choosing $\varepsilon$ small enough we obtain the desired inequality.
\end{proof}

\begin{lem}\label{lem:FP_theta}
	Let $b \in PC(\Gamma) $. Then, for every $\vartheta \geq 0$, there exists unique $m^\vartheta \in \bssob$ such that $m^{\vartheta} \geq 0$, $\int_\Gamma m^\vartheta\, dx = \vartheta$ and satisfying \eqref{eq:FP_weak}. Furthermore, $m^0 = 0$ and $m^\vartheta > 0$ for $\vartheta > 0$.
\end{lem}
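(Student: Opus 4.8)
The plan is to reduce to $\vartheta>0$ (for $\vartheta=0$ one simply takes $m^0\equiv0$, which solves \cref{eq:FP_weak}, and any nonnegative weak solution with vanishing integral is $0$ a.e.) and then, since \cref{eq:FP_weak} is linear, to construct a solution for $\vartheta=1$ and set $m^\vartheta:=\vartheta\,m^1$. Throughout I would work with the function $\psi$ of the proof of \cref{lem:FP_estimate}: multiplication by $\psi$ is a Banach space isomorphism between $\cssob$ and $\bssob$, so \cref{eq:FP_weak} can be read as a variational problem posed on the single Hilbert space $\cssob$, and (as in \cite{ADLT2019}) the generalized Kirchhoff conditions at interior vertices and the no-flux conditions at $\partial\cV$ listed in \cref{eq:FP} are exactly the ones encoded by testing \cref{eq:FP_weak} against functions that do not vanish at the vertices.

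For existence I would use a zeroth-order regularization. For $\varepsilon>0$, consider the problem of finding $m_\varepsilon\in\bssob$ with
\[
\int_\Gamma \mu\,\partial m_\varepsilon\,\partial v+b\,m_\varepsilon\,\partial v\,dx+\varepsilon\int_\Gamma m_\varepsilon v\,dx=\frac{\varepsilon}{\leb(\Gamma)}\int_\Gamma v\,dx\qquad\text{for all }v\in\cssob.
\]
After the substitution $m_\varepsilon=\psi w_\varepsilon$ this becomes a variational problem on $\cssob$ which is coercive for $\varepsilon$ large (Gårding's inequality) and Fredholm of index zero in general, hence uniquely solvable for every $\varepsilon$ outside a discrete subset of $(0,\infty)$, in particular along a sequence $\varepsilon_n\downarrow0$. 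Taking $v\equiv1$ forces $\int_\Gamma m_{\varepsilon_n}\,dx=1$; the weak maximum principle (the zeroth-order coefficient $\varepsilon_n$ is positive and the right-hand side is nonnegative) gives $m_{\varepsilon_n}\ge0$, whence $\norm{m_{\varepsilon_n}}{L^1}=1$. Running the argument of \cref{lem:FP_estimate} on the regularized equation — the extra term $\varepsilon_n\int_\Gamma\psi\,m_{\varepsilon_n}^2$ has the favorable sign and the new right-hand side is $O(\varepsilon_n)$ — I would obtain $\norm{m_{\varepsilon_n}}{\bssob}\le C$ uniformly in $n$. By the compact embedding $\bssob\hookrightarrow\hookrightarrow L^2(\Gamma)$ (with uniform convergence on each edge) I pass to a subsequence converging to some $m$, and letting $\varepsilon_n\to0$ in the equation I get $m\in\bssob$ solving \cref{eq:FP_weak} with $m\ge0$ and $\int_\Gamma m\,dx=1$.

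The key auxiliary statement, which I would prove next, is a strong maximum principle: \emph{if $m\in\bssob$ is a weak solution of \cref{eq:FP_weak} with $m\ge0$, then either $m\equiv0$ or $m>0$ on $\Gamma$.} On each edge, $m$ solves a uniformly elliptic equation in divergence form with bounded coefficients, so by the Harnack inequality it cannot vanish at an interior point of an edge without vanishing on the whole edge; and if $m$ vanishes at a vertex $\varv$ (i.e. $T_\varv[m]=0$) but not on some incident edge $\Gamma_\alpha$, the Hopf lemma on $\Gamma_\alpha$ gives $\partial_\alpha m_{|\Gamma_\alpha}(\varv)<0$, so the flux condition at $\varv$ reduces to $\sum_{\beta\in\cA_\varv}\mu_\beta\,\partial_\beta m_{|\Gamma_\beta}(\varv)=0$ with every summand $\le0$ and at least one $<0$, a contradiction (and the no-flux condition excludes vanishing at a boundary vertex in the same way). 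Hence $\{m=0\}$ is a union of entire edges, open and closed in $\Gamma$, so $m\equiv0$ or $m>0$ since $\Gamma$ is connected. Granting this, the $m$ built above satisfies $m>0$ (it is not identically zero, since $\int_\Gamma m=1$), so $m^1:=m$ and $m^\vartheta:=\vartheta m^1>0$ work; and for uniqueness, given two nonnegative weak solutions $m_1,m_2$ with $\int_\Gamma m_i=\vartheta>0$, both are strictly positive, the ratio $m_1/m_2$ is continuous on $\Gamma$ (the weights $\gamma_{\varv,\alpha}$ cancel at the vertices) and attains its maximum $\lambda$, and $w:=\lambda m_2-m_1$ is a nonnegative weak solution vanishing somewhere, hence $w\equiv0$ by the maximum principle; integrating gives $\lambda=1$ and $m_1=m_2$.

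The step I expect to be the main obstacle is the vertex part of the maximum principle above: one must combine the one-dimensional Harnack and Hopf estimates with the generalized Kirchhoff and no-flux conditions, and in particular verify carefully that \cref{eq:FP_weak} encodes precisely these conditions with the correct signs. The only other technical point is the solvability of the regularized problem along a sequence $\varepsilon_n\downarrow0$, which is routine once one identifies the corresponding operator as Fredholm of index zero.
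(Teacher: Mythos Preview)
Your argument is correct, but it takes a substantially different route from the paper's. The paper simply invokes \cite[Theorem~2.7]{ADLT2019} for the case $\vartheta=1$ (existence, uniqueness, and strict positivity of $m^1$) and then sets $m^\vartheta=\vartheta\,m^1$; the case $\vartheta=0$ is immediate from \cref{lem:FP_estimate}. You instead reprove the content of that cited theorem from scratch: existence of $m^1$ via a zeroth-order regularization and compactness, and positivity/uniqueness via a network strong maximum principle combining the one-dimensional Harnack inequality on edges with Hopf's lemma and the flux condition at vertices. Both approaches share the reduction to $\vartheta=1$ by linearity; yours buys self-containment, the paper's buys brevity by leaning on \cite{ADLT2019}.

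Two small points are worth tightening. First, the nonnegativity of $m_{\varepsilon_n}$ is not quite a ``standard'' weak maximum principle because of the junction structure; the cleanest way to justify it is to test the regularized identity with $v=\psi^{-1}(m_{\varepsilon_n})^-\in\cssob$ (note $(m_{\varepsilon_n})^-\in\bssob$ since the $\gamma_{\varv,\alpha}$ are positive) and argue as in \cref{lem:FP_estimate}. Second, your Fredholm claim that the exceptional set of $\varepsilon$ is discrete with no accumulation at $0$ is correct, but it is worth saying explicitly that the resolvent is compact (via $\bssob\hookrightarrow\hookrightarrow L^2(\Gamma)$), so only finitely many exceptional $\varepsilon$ lie in any bounded interval. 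With these two remarks your proof is complete and in fact gives an independent proof of the relevant part of \cite[Theorem~2.7]{ADLT2019}.
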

\begin{proof}
	From \cref{lem:FP_estimate} it is clear that $m^0 = 0$ is the unique element in $\bssob$ satisfying the conditions in the statement. From \cite[Theorem 2.7]{ADLT2019} we know that there exists a unique $m^1$ satisfying \eqref{eq:FP_weak} with $\int_\Gamma m^1\, dx = 1$. Moreover we know that $m^1 >0$, in particular $T_\varv[m] > 0$ for every $\varv \in \cV \setminus \partial \cV$. Then, setting $m^\vartheta = \vartheta m^1$, we obtain an element in $\bssob$ satisfying \eqref{eq:FP_weak}, $m^\vartheta > 0$ and $\int_\Gamma m^\vartheta \, dx = \vartheta$.
\end{proof}

\begin{prop}\label{prop:FP_well_posed}
	Let $b \in PC(\Gamma) $. Then there exists a unique weak solution $m \in \bssob$ to \eqref{eq:FP}.
\end{prop}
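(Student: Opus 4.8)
The plan is to reduce \eqref{eq:FP} to the problem already solved in \cref{lem:FP_theta} by selecting the correct total mass. Observe first that any weak solution $m$ to \eqref{eq:FP} in the sense of \cref{defi:weak_FP} satisfies \eqref{eq:FP_weak} together with $m \geq 0$; hence, setting $\vartheta := \int_\Gamma m\, dx \geq 0$, the uniqueness part of \cref{lem:FP_theta} forces $m = m^\vartheta = \vartheta m^1$, where $m^1 > 0$ is the unique nonnegative element of $\bssob$ solving \eqref{eq:FP_weak} with unit mass. Conversely, for every $\vartheta \geq 0$ the function $\vartheta m^1$ lies in $\bssob$, solves \eqref{eq:FP_weak}, and is nonnegative. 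Therefore the existence and uniqueness question for \eqref{eq:FP} amounts entirely to identifying those values of $\vartheta$ for which the normalization constraint in \cref{defi:weak_FP} holds.

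Since the trace functionals $T_\varv$ are linear, one has $T_\varv[\vartheta m^1] = \vartheta\, T_\varv[m^1]$ for each $\varv \in \cV \setminus \partial \cV$, so the constraint $\int_\Gamma m\, dx = 1 - \sum_{\varv \in \cV \setminus \partial \cV} \eta_\varv T_\varv[m]$ becomes the scalar linear equation
\[
  \vartheta = 1 - \vartheta \sum_{\varv \in \cV \setminus \partial \cV} \eta_\varv\, T_\varv[m^1].
\]
Writing $S := \sum_{\varv \in \cV \setminus \partial \cV} \eta_\varv\, T_\varv[m^1]$, we have $S \geq 0$ because $\eta_\varv \geq 0$ and $T_\varv[m^1] > 0$ (the latter by the positivity of $m^1$ in \cref{lem:FP_theta}). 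Hence the equation has the unique root $\vartheta^* = (1+S)^{-1} \in (0,1]$, which already gives uniqueness. For existence one checks that $m := \vartheta^* m^1$ satisfies all requirements of \cref{defi:weak_FP}: it belongs to $\bssob$ and solves \eqref{eq:FP_weak} by \cref{lem:FP_theta}; it is strictly positive; $\int_\Gamma m\, dx = \vartheta^* \in (0,1]$; and, by the very choice of $\vartheta^*$, $1 - \sum_{\varv} \eta_\varv T_\varv[m] = 1 - \vartheta^* S = \vartheta^* = \int_\Gamma m\, dx \geq 0$, so the full chain $1 \geq \int_\Gamma m\, dx = 1 - \sum_{\varv} \eta_\varv T_\varv[m] \geq 0$ holds.

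There is essentially no analytic obstacle at this stage: the substantial work — the coercivity estimate and the existence and strict positivity of $m^1$ — has been done in \cref{lem:FP_estimate}, \cref{lem:FP_theta}, and \cite[Theorem 2.7]{ADLT2019}. The only point deserving a remark is that, once the candidate is known to be proportional to $m^1$, the normalization condition collapses to a linear equation whose coefficient $1+S$ is bounded below by $1$, so a unique admissible total mass $\vartheta^* \in (0,1]$ always exists; in particular the stickiness coefficients $\eta_\varv$ can only decrease the absolutely continuous mass of $\mf$, in agreement with the decomposition \eqref{eq:mu}.
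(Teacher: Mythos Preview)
Your argument is correct and, in fact, more direct than the paper's. The paper proves existence by defining $\Phi(\vartheta)=\vartheta+\sum_{\varv}\eta_\varv T_\varv[m^\vartheta]$, establishing its continuity via a compactness argument on the map $\vartheta\mapsto m^\vartheta$, and then invoking the intermediate value theorem to find $\bar\vartheta$ with $\Phi(\bar\vartheta)=1$; uniqueness is handled by a separate normalization trick. You instead exploit the explicit linear relation $m^\vartheta=\vartheta m^1$ already contained in the proof of \cref{lem:FP_theta}, which collapses $\Phi$ to the linear map $\vartheta\mapsto(1+S)\vartheta$ and yields existence and uniqueness simultaneously via $\vartheta^\ast=(1+S)^{-1}$. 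Your route is shorter and avoids the continuity/IVT machinery entirely; the paper's approach, while heavier here, has the minor advantage that it would survive in a setting where the map $\vartheta\mapsto m^\vartheta$ were merely continuous rather than linear.
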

\begin{proof}
	For existence, we consider the mapping $\Phi \colon [0,1] \to \RR_+$ defined by
	\[
	\Phi(\vartheta) = \vartheta + \sum_{\varv \in \cV \setminus \partial \cV} \eta_\varv T_\varv[m^\vartheta],
	\]
	where $m^\vartheta$ is given in \cref{lem:FP_theta}. We claim that the mapping $\Phi$ is continuous. Indeed, let $(\vartheta_n)_{n \geq 0}$ be a sequence in $[0,1]$ converging to some $\vartheta \in [0,1]$ as $n$ tends to infinity. Using \cref{lem:FP_theta}, for each $n \geq 0$, there exists $m^n \in \bssob$ such that $\int m^n\, dx = \vartheta_n$, $m^n \geq 0$ and satisfies \eqref{eq:FP_weak}. Using \cref{lem:FP_estimate}, we see that the sequence $(m^n)_{n\geq 0}$ is bounded in $\bssob$ and we may therefore extract a subsequence, which we still denote by $m^n$, converging weakly in $\bssob$ and strongly in $PC(\Gamma) $ to some $m \in \bssob$ as $n$ tends to infinity. It follows that $m$ also satisfies \eqref{eq:FP_weak}, is non-negative and $\int m\, dx = \vartheta$. From uniqueness in \cref{lem:FP_theta} we conclude to $m = m^\vartheta$ and that the whole sequence converges to $m^\vartheta$. This proves the continuity of the mapping $\vartheta \mapsto m^\vartheta$ from $[0,1]$ to $PC(\Gamma) $. The continuity of $\Phi$ then follows from the continuity of $PC(\Gamma) \ni \tilde m \mapsto T_\varv[\tilde m] \in \RR$ for every $\varv \in \cV\setminus \partial \cV$.

	Notice that $\Phi(0) = 0$ and that $\Phi(\vartheta) > \vartheta$ for $\vartheta > 0$ since each $m^\vartheta$ is strictly positive. From the intermediate value theorem, we conclude that there exists $\bar \vartheta \in [0,1]$ such that
	\[
	\Phi(\bar \vartheta) = \bar \vartheta + \sum_{\varv \in \cV \setminus \partial \cV} \eta_\varv T_\varv \left [m^{\bar \vartheta} \right ] = \int_\Gamma m^{\bar \vartheta}\, dx +  \sum_{\varv \in \cV \setminus \partial \cV} \eta_\varv T_\varv \left [m^{\bar \vartheta} \right] = 1.
	\]
	This proves existence.

	We now prove uniqueness. Notice first that we cannot have $\sum_{\varv \in \cV \setminus \partial \cV} \eta_\varv T_\varv \left [m \right] = 1$ since, otherwise, \cref{lem:FP_theta} implies $m=0$, which is a contradiction. Let $m_1$ and $m_2$ be two weak solutions to \eqref{eq:FP}. Up to relabelling, we may assume that
	\begin{equation}\label{eq:FP_uniqueness}
		1 > \sum_{\varv \in \cV \setminus \partial \cV} \eta_\varv T_\varv \left [m_1 \right] \geq \sum_{\varv \in \cV \setminus \partial \cV} \eta_\varv T_\varv \left [m_2 \right].
	\end{equation}
	Set $w_i = \left (1 - \sum_{\varv \in \cV \setminus \partial \cV} \eta_\varv T_\varv \left [m_i \right] \right)^{-1} m_i$ for $i=1,2$. Then $w_i$ satisfies \eqref{eq:FP}, $w_i \geq 0$ and $\int w_i\, dx = 1$. From uniqueness in \cite[Theorem 2.7]{ADLT2019} we deduce that $w_1 =  w_2=: \bar m$. In particular
	\begin{equation}\label{eq:FP_uniqueness_2}
	m_i = \left (1 - \sum_{\varv \in \cV \setminus \partial \cV} \eta_\varv T_\varv \left [m_i \right] \right)\bar m.
	\end{equation}
	From \eqref{eq:FP_uniqueness} we obtain $m_1 \leq m_2$ and therefore
	\[
	\sum_{\varv \in \cV \setminus \partial \cV} \eta_\varv T_\varv \left [m_1 \right] = \sum_{\varv \in \cV \setminus \partial \cV} \eta_\varv T_\varv \left [m_2 \right].
	\]
	 Uniqueness then follows from \eqref{eq:FP_uniqueness_2}.
\end{proof}

\begin{thm}
	The measure $\mf$ defined by \eqref{eq:mu} is a stationary distribution for the Markov process generated by \eqref{eq:s1_eq1} and \eqref{eq:s1_eq1a}.
\end{thm}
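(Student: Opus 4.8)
The plan is to verify directly that $\int_\Gamma \cG f\, \mf(dx) = 0$ for every $f \in D(\cG)$, using the decomposition \eqref{eq:mu} and the weak formulation \eqref{eq:FP_weak} satisfied by the density $m$. First I would write out the integral against $\mf$ as
\[
\int_\Gamma \cG f\, \mf(dx) = \int_\Gamma \cG f(x)\, m(x)\, dx + \sum_{\varv \in \cV \setminus \partial \cV} \eta_\varv T_\varv[m]\, \cG f(\varv),
\]
where the second term makes sense because $\cG f \in \cC(\Gamma)$ for $f \in D(\cG)$. The first integral is $\sum_{\alpha \in \cA} \int_0^{L_\alpha} \left( \mu_\alpha \partial^2 f_\alpha + b_\alpha \partial f_\alpha \right) m_\alpha \, dy$. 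On each edge I would integrate by parts twice on the $\mu_\alpha \partial^2 f_\alpha$ term and once on the $b_\alpha \partial f_\alpha$ term; since $m$ is only $W^{1,2}$ (not $W^{2,2}$), the integration by parts must be arranged so that at most one derivative falls on $m$, i.e. I move derivatives off $f$ and onto $m$. The interior terms then reassemble into $\int_\Gamma \left( \mu \partial m \partial f + b m \partial f \right) dx$ — wait, more carefully: $\int \mu_\alpha \partial^2 f_\alpha\, m_\alpha = [\mu_\alpha \partial f_\alpha\, m_\alpha]_0^{L_\alpha} - \int \mu_\alpha \partial f_\alpha \partial m_\alpha$, which is exactly the right shape to pair with \eqref{eq:FP_weak} once we note $f \in D(\cG) \subset \cC(\Gamma)$ is continuous at vertices but $m$ satisfies the $\bgamma$-transmission condition, so $f$ is not a legal test function in \eqref{eq:FP_weak} directly; instead I would use the weak formulation in the form obtained by integrating \eqref{eq:FP_weak}'s strong version against $f$, collecting boundary terms.

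The cleanest route: establish the identity
\[
\int_\Gamma \cG f\, m\, dx = -\sum_{\varv \in \cV} \Big( \sum_{\alpha \in \cA_\varv} \mu_\alpha m_{|\Gamma_\alpha}(\varv)\, \partial_\alpha f(\varv) \Big) - \sum_{\varv\in\cV}\Big(\sum_{\alpha\in\cA_\varv} n_{\varv,\alpha} \mu_\alpha m_{|\Gamma_\alpha}(\varv)\partial_\alpha f(\varv)\Big)\ \cdots
\]
— rather than guess the boundary terms, I would integrate by parts carefully on each edge keeping all endpoint contributions, then use the weak equation \eqref{eq:FP_weak} (which, after integrating its strong form \eqref{eq:FP} against a continuous $f$, yields $\int_\Gamma \mu \partial m \partial f + bm\partial f\, dx = -\sum_{\varv} f(\varv) \sum_{\alpha \in \cA_\varv}\big(\mu_\alpha \partial_\alpha m_{|\Gamma_\alpha}(\varv) + n_{\varv,\alpha} m_{|\Gamma_\alpha}(\varv) b_{|\Gamma_\alpha}(\varv)\big)$, and this sum vanishes by the third line of \eqref{eq:FP}). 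After this cancellation the only surviving terms in $\int_\Gamma \cG f\, m\, dx$ are the endpoint terms $\sum_{\varv \in \cV} \sum_{\alpha \in \cA_\varv} \mu_\alpha\, m_{|\Gamma_\alpha}(\varv)\, \partial_\alpha f(\varv)$ coming from the double integration by parts of $\partial^2 f$. Here I use the transmission condition $m_{|\Gamma_\alpha}(\varv) = \gamma_{\varv,\alpha} T_\varv[m]$, so this equals $\sum_{\varv \in \cV \setminus \partial \cV} T_\varv[m] \sum_{\alpha \in \cA_\varv} \mu_\alpha \gamma_{\varv,\alpha}\, \partial_\alpha f(\varv)$ (boundary vertices contribute zero since $\partial_\alpha f(\varv) = 0$ there for $f \in D(\cG)$).

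Finally I would invoke the Kirchhoff-type condition in the domain $D(\cG)$, namely $\eta_\varv \cG f(\varv) + \sum_{\alpha \in \cA_\varv} \mu_\alpha \gamma_{\varv,\alpha} \partial_\alpha f(\varv) = 0$ for $\varv \in \cV \setminus \partial \cV$, which lets me replace $\sum_{\alpha \in \cA_\varv} \mu_\alpha \gamma_{\varv,\alpha} \partial_\alpha f(\varv) = -\eta_\varv \cG f(\varv)$. Therefore $\int_\Gamma \cG f\, m\, dx = -\sum_{\varv \in \cV \setminus \partial \cV} \eta_\varv T_\varv[m]\, \cG f(\varv)$, which exactly cancels the Dirac part $\sum_{\varv \in \cV \setminus \partial \cV} \eta_\varv T_\varv[m]\, \cG f(\varv)$, giving $\int_\Gamma \cG f\, \mf(dx) = 0$. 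I also need to check $\mf$ is a genuine probability measure: $\mf \geq 0$ is immediate from $m \geq 0$, $\eta_\varv \geq 0$, $T_\varv[m] \geq 0$, and $\mf(\Gamma) = \int_\Gamma m\, dx + \sum_{\varv \in \cV \setminus \partial \cV} \eta_\varv T_\varv[m] = 1$ is precisely the last line of \eqref{eq:FP}. The main obstacle I anticipate is purely bookkeeping: correctly tracking the signs via $n_{\varv,\alpha}$ and the outward-derivative convention $\partial_\alpha u(\varv) = n_{\varv,\alpha}\partial u_\alpha(\pi_\alpha^{-1}(\varv))$ through the two integrations by parts, and making sure the regularity $m \in \bssob$, $f \in \cC^2(\Gamma)$ justifies every endpoint evaluation — but no analytic difficulty arises since everything is one-dimensional on each edge.
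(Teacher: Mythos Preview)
Your overall strategy---integrate by parts once on $\mu_\alpha \partial^2 f_\alpha\, m_\alpha$, collect the endpoint contributions using $m_{|\Gamma_\alpha}(\varv) = \gamma_{\varv,\alpha} T_\varv[m]$, cancel the interior integral via the weak formulation, and then use the domain condition $\eta_\varv \cG f(\varv) + \sum_\alpha \mu_\alpha \gamma_{\varv,\alpha} \partial_\alpha f(\varv) = 0$ to cancel the Dirac part---is exactly the paper's proof.

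There is, however, one incorrect claim that sends you on an unnecessary detour. You write that ``$f$ is not a legal test function in \eqref{eq:FP_weak} directly,'' but it is: the test-function space in \eqref{eq:FP_weak} is $\cssob = \cC(\Gamma) \cap W^{1,2}(\Gamma)$, and $f \in D(\cG) \subset \cC^2(\Gamma) \subset \cssob$. You have confused the space of the solution $m$ (which lies in $\bssob$, with the $\bgamma$-transmission condition) with the space of test functions (which are merely continuous). Once you plug $v = f$ into \eqref{eq:FP_weak} you get $\int_\Gamma \mu \partial f \partial m + b m \partial f\, dx = 0$ for free, and your computation closes immediately---this is precisely what the paper does.

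Your proposed workaround, namely integrating the \emph{strong} form of \eqref{eq:FP} against $f$ and invoking the flux condition (third line of \eqref{eq:FP}) pointwise, is not justified: $m$ is only a weak solution in $\bssob$, and the flux condition is encoded \emph{in} the weak formulation rather than being separately available. So that route has a genuine regularity gap, whereas the direct use of \eqref{eq:FP_weak} with $v=f$ does not.
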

\begin{proof}
	Let $f \in D(\cG)$. Integrating by part and using the fact that $m \in \bssob$, we compute
	\begin{align*}
	 - \int_{\Gamma} \mu \partial^2 f(x) m(x) \, dx &= - \sum_{\alpha \in \cA} \int_{0}^{L_\alpha} \mu_\alpha \partial^2 f_\alpha(x) m_\alpha(x)\, dx \\
	 & = - \sum_{\alpha \in \cA} \mu_\alpha \left [ \partial f_\alpha(x) m_\alpha(x) \right]_{x=0}^{x=L_\alpha} + \sum_{\alpha \in \cA} \int_0^{L_\alpha} \mu_\alpha \partial f_\alpha(x) \partial m_\alpha(x)\, dx \\
	 & = - \left [\sum_{\varv \in \cV \setminus \partial \cV} \sum_{\alpha \in \cA_\varv} \mu_\alpha  \gamma_{\varv_\alpha} \partial_\alpha f(\varv) T_\varv[m] \right ] + \int_\Gamma \mu \partial f(x) \partial m(x)\, dx.
	\end{align*}
	Since $f \in D(\cG)$ and $m$ satifies \eqref{eq:FP_weak}, it follows that
	\begin{equation*}
		\begin{split}
			- \int_\Gamma \cG f(x) \mf(dx) & = - \int_\Gamma \cG f(x)m(x)\ dx - \sum_{\varv \in \cV\setminus \partial \cV} \eta_\varv  T_\varv[m] \cG f(\varv) \\  = \int_\Gamma (\mu \partial f \partial m& + b \partial f m )\, dx - \sum_{\varv \in \cV \setminus \partial \cV} T_\varv[m] \left ( \eta_\varv \cG f(\varv) + \sum_{\alpha \in \cA_\varv} \mu_\alpha \gamma_{\varv,\alpha} \partial_\alpha f(\varv) \right ) =0.
		\end{split}
	\end{equation*}
\end{proof}

\begin{rem}
 Observe that in the undelayed case, i.e. $\eta_\varv = 0$ for every $\varv \in \cV \setminus \partial \cV$, then we recover the stationary distribution obtained in \cite{ADLT2019}.
\end{rem}

\begin{prop}[Stability of $m$]\label{prop:FP_stability}
  Given $b^n \in PC(\Gamma)$, $n \in \NN$, let $m^n \in \bssob$ be the corresponding solution to \eqref{eq:FP}, given by \cref{prop:FP_well_posed}. Assume that there exists $b \in PC(\Gamma)$ such that $b^n_\alpha$ converges uniformly to $b_\alpha$ for every $\alpha \in \cA$. Then there exists $m \in \bssob$ such that 
  \begin{equation}\label{eq:stability}
   \begin{cases}
    m^n_\alpha \to m_\alpha \quad & \textnormal{uniformly, for every } \alpha \in \cA, \\
    m^n \rightharpoonup m \quad & \textnormal{weakly in } \bssob.
   \end{cases}
  \end{equation}

\end{prop}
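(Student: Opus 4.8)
The plan is to show that the solution map $b \mapsto m$ is continuous from $PC(\Gamma)$ (with the topology of edgewise uniform convergence) into $\bssob$ equipped with its weak topology and, simultaneously, into $PC(\Gamma)$; uniqueness from \cref{prop:FP_well_posed} will then promote subsequential convergence to convergence of the whole sequence.

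First I would record a uniform bound. Since $b^n_\alpha \to b_\alpha$ uniformly on $(0,L_\alpha)$ for each $\alpha \in \cA$ and $\cA$ is finite, the sequence $(b^n)$ is bounded in $\Lx{\infty}$, say $\norm{b^n}{L^\infty} \leq M$ for all $n$. Each $m^n$ is a weak solution to \eqref{eq:FP} with drift $b^n$, so $m^n \geq 0$ and $\int_\Gamma m^n\, dx \leq 1$, hence $\norm{m^n}{L^1} \leq 1$. Applying \cref{lem:FP_estimate} with the uniform constant $C = C(\mu, M, \bgamma, \Gamma)$ yields $\norm{m^n}{\bssob} \leq C$ for every $n$. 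Since $\bssob$ is a Hilbert space and, each edge being a bounded interval, the embedding $\bssob \hookrightarrow PC(\Gamma)$ is compact (Rellich--Kondrachov on each $(0,L_\alpha)$, finitely many edges), I can extract a subsequence — not relabelled — such that $m^n \rightharpoonup m$ weakly in $\bssob$ and $m^n_\alpha \to m_\alpha$ uniformly for each $\alpha \in \cA$, for some $m \in \bssob$.

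Next I would pass to the limit in the weak formulation
\[
\int_\Gamma \mu \partial m^n \partial v + b^n m^n \partial v\ dx = 0, \qquad v \in \cssob.
\]
The first term converges because $\partial m^n \rightharpoonup \partial m$ weakly in $\Lx{2}$ while $\partial v \in \Lx{2}$; the second term converges because $b^n m^n \to bm$ uniformly on each edge (product of edgewise uniformly convergent sequences), hence in $\Lx{1}$, and $\partial v \in \Lx{\infty}$. Thus $m$ satisfies \eqref{eq:FP_weak} with drift $b$. Moreover $m \geq 0$ as an edgewise uniform limit of non-negative functions, and passing to the limit in $\int_\Gamma m^n\, dx = 1 - \sum_{\varv \in \cV\setminus\partial\cV} \eta_\varv T_\varv[m^n]$ — using edgewise uniform convergence for the left-hand side and continuity of the vertex-trace maps $\tilde m \mapsto \tilde m_{|\Gamma_\alpha}(\varv)$ on $PC(\Gamma)$ for the right-hand side — shows that $m$ satisfies the mass constraint in \eqref{eq:FP}, with $\int_\Gamma m\, dx \in [0,1]$ inherited from the corresponding bounds on $m^n$. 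Hence $m$ is the weak solution to \eqref{eq:FP} associated with $b$ in the sense of \cref{defi:weak_FP}.

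Finally, \cref{prop:FP_well_posed} guarantees this weak solution is unique, so the limit $m$ does not depend on the chosen subsequence. A standard argument — every subsequence of $(m^n)$ admits a further subsequence converging, in both senses of \eqref{eq:stability}, and necessarily to the same $m$ — then gives convergence of the whole sequence, which is \eqref{eq:stability}. The only mildly delicate point is ensuring the constant in \cref{lem:FP_estimate} is uniform in $n$, which is precisely why the $\Lx{\infty}$-boundedness of $(b^n)$ is extracted at the outset; the remainder is routine compactness and weak lower semicontinuity.
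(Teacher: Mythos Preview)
Your proof is correct and follows essentially the same approach as the paper: uniform $\bssob$-bound via \cref{lem:FP_estimate}, compactness to extract a weakly/uniformly convergent subsequence, passage to the limit in \eqref{eq:FP_weak} and in the mass constraint, and then uniqueness from \cref{prop:FP_well_posed} to upgrade to full-sequence convergence. You are in fact slightly more explicit than the paper in justifying the uniformity of the constant in \cref{lem:FP_estimate} (via the $L^\infty$-boundedness of $(b^n)$) and in detailing the limit passage in the bilinear term.
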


\begin{proof}
  We first consider arbitrary subsequences, still denoted by $b^n$ and $m^n$. We know that $m^n \geq 0$ and $0 \leq \int_\Gamma m^n \, dx \leq 1$ for every $n \in \NN$. In particular $(m^n)_{n \in \NN}$ is bounded in $\Lx{1}$. It then follows from \cref{lem:FP_estimate} that it is also bounded in $\bssob$. We deduce that there exists a strictly increasing map $\iota \colon \NN \to \NN$ and $m \in \bssob$ such that $m^{\iota(n)} \rightharpoonup m$ weakly in $\bssob$ and $m^{\iota(n)}_\alpha \to m_\alpha$ uniformly.
  We can pass to the limit in the weak formulation \eqref{eq:FP_weak} to deduce that $m$ also satisfies \eqref{eq:FP_weak}. In addition we also have
  \[
   1 =  \lim_{n \to \infty} \left [ \int_\Gamma m^{\iota(n)} \, dx  + \sum_{\varv \in \cV \setminus{\partial \cV}} \eta_\varv T_\varv[m^{\iota(n)}] \right ] = \int_\Gamma m \, dx  + \sum_{\varv \in \cV \setminus{\partial \cV}} \eta_\varv T_\varv[m].
  \]
  This proves that $m$ is a solution \eqref{eq:FP}. Since this solution is unique we have proven that every subsequence of $(m^n)_{n \in \NN}$ has a further subsequence converging to an unique limit. We conclude that the full sequence converges to $m$ in the topologies mentioned in \eqref{eq:stability}.

\end{proof}

\begin{prop}[Stability of $\mf$]\label{prop:continuity_density}
 Let $(m^n)_{n \in \NN}$ be a sequence in $PC(\Gamma)$ satisfying
 \[
  m^n \geq 0 \quad \textnormal{and} \quad \int_\Gamma m^{n} \, dx  + \sum_{\varv \in \cV \setminus{\partial \cV}} \eta_\varv T_\varv[m^n] = 1 \quad \textnormal{for all } n \in \NN,
 \]
 and converging uniformly to some $m \in PC(\Gamma)$. Define $(\mf^n)_{n \in \NN}$ and $\mf$ according to \eqref{eq:mu}. Then $\mf_n$ converges to $\mf$ in $\cP_1(\Gamma)$.
\end{prop}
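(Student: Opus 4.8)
The plan is to bound the Kantorovich--Rubinstein distance $\dw(\mf^n,\mf)$ directly via its dual formulation. Since $\Gamma$ is a finite union of compact segments glued at finitely many points, it is a compact metric space for the geodesic distance, so $D := \diam(\Gamma) < \infty$ and, for $\mu,\nu \in \cP_1(\Gamma)$,
\[
 \dw(\mu,\nu) = \sup\Big\{ \textstyle\int_\Gamma \phi\, d\mu - \int_\Gamma \phi\, d\nu \ :\ \phi \colon \Gamma \to \RR,\ \Lip(\phi) \leq 1 \Big\}
\]
(see \cite[Chapter 6]{V2009}). First I would check that $\mf^n$ and $\mf$ genuinely belong to $\cP_1(\Gamma)$: non-negativity follows from $m^n, m \geq 0$, $\eta_\varv \geq 0$ and $T_\varv[\cdot] \geq 0$, while the total mass is $1$ by hypothesis for each $\mf^n$ and, for $\mf$, because uniform convergence of $m^n_\alpha$ to $m_\alpha$ on each $[0,L_\alpha]$ forces both $\int_\Gamma m^n\,dx \to \int_\Gamma m\,dx$ and $T_\varv[m^n] \to T_\varv[m]$, so the normalization passes to the limit.

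Since $\mf^n$ and $\mf$ have the same total mass, adding a constant to a competitor $\phi$ in the duality formula does not change the value; I may therefore restrict to $1$-Lipschitz $\phi$ with $\phi(\bar\varv) = 0$ at a fixed vertex $\bar\varv$, so that $|\phi| \leq D$ on $\Gamma$. Expanding the definition \eqref{eq:mu},
\[
 \int_\Gamma \phi\, d\mf^n - \int_\Gamma \phi\, d\mf = \int_\Gamma \phi\,(m^n - m)\, dx + \sum_{\varv \in \cV \setminus \partial \cV} \eta_\varv\big(T_\varv[m^n] - T_\varv[m]\big)\phi(\varv),
\]
whence, using $\int_\Gamma |m^n-m|\,dx \leq \leb(\Gamma)\,\norm{m^n-m}{PC(\Gamma)}$,
\[
 \left| \int_\Gamma \phi\, d\mf^n - \int_\Gamma \phi\, d\mf \right| \leq D\Big( \leb(\Gamma)\, \norm{m^n-m}{PC(\Gamma)} + \sum_{\varv \in \cV \setminus \partial \cV} \eta_\varv\,|T_\varv[m^n] - T_\varv[m]| \Big).
\]

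Finally I would take the supremum over admissible $\phi$ on the left and let $n \to \infty$. The right-hand side is independent of $\phi$, so $\dw(\mf^n,\mf)$ is bounded by it; its first summand tends to $0$ by the assumed uniform convergence on each edge, and the second does too, because $T_\varv[m^n] = m^n_{|\Gamma_\alpha}(\varv)/\gamma_{\varv,\alpha}$ is a fixed multiple of an endpoint trace of $m^n_\alpha$, which converges to the corresponding trace of $m_\alpha$ by uniform convergence, and $\cV$ is finite. Hence $\dw(\mf^n,\mf) \to 0$. There is no genuine obstacle here; the only points requiring a little care are the legitimacy of normalizing the test function (which rests on $\mf^n$ and $\mf$ being probability measures, i.e.\ on the mass identity passing to the limit) and the elementary but essential observation that convergence in $PC(\Gamma)$ simultaneously controls the absolutely continuous bulk and the vertex traces defining the singular part of the measures.
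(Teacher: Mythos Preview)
Your proof is correct and follows essentially the same route as the paper's: both verify $\mf \in \cP_1(\Gamma)$ by passing the normalization to the limit, then expand $\int \phi\, d(\mf^n-\mf)$ using \eqref{eq:mu} and bound it by $\diam(\Gamma)$ times $\norm{m^n-m}{L^1}$ (you use the slightly larger $\leb(\Gamma)\norm{m^n-m}{PC(\Gamma)}$) plus the vertex terms, and conclude via Kantorovich--Rubinstein duality. Your explicit normalization $\phi(\bar\varv)=0$ is a minor clarification of a step the paper leaves implicit.
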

\begin{proof}
Notice first that the uniform convergence of $m^n_\alpha$ to $m_\alpha$ implies that $m \geq 0$ and that
  \[
   1 =  \lim_{n \to \infty} \left [ \int_\Gamma m^n \, dx  + \sum_{\varv \in \cV \setminus{\partial \cV}} \eta_\varv T_\varv[m^n] \right ] = \int_\Gamma m \, dx  + \sum_{\varv \in \cV \setminus{\partial \cV}} \eta_\varv T_\varv[m].
  \]
It follows that we indeed have $\mf \in \cP_1(\Gamma)$. Let $\psi \in \Lip(\Gamma)$,   the space of functions from $\Gamma$ to $\RR$ which are Lipschitz continuous with respect to the geodetic metric on $\Gamma$. We compute
  \begin{align*}
   & \int_\Gamma \psi(x)(\mf - \mf^n)(dx) = \int_\Gamma \psi(x)(m(x) - m^n(x)) dx + \sum_{\varv \in \cV \setminus{\partial \cV}} \eta_\varv \psi(\varv) \left (T_\varv[m] - T_\varv[m^n] \right ) \\
   & \qquad \leq \norm{\psi}{\Lip(\Gamma)} \diam(\Gamma) \left( \norm{m - m^n}{L^1(\Gamma)} + \sum_{\varv \in \cV \setminus{\partial \cV}} \eta_\varv \module{T_\varv[m] - T_\varv[m^n]} \right ).
  \end{align*}
  From standard properties of the Kantorovich-Rubinstein metric (see \cite[Remark 6.5]{V2009} for instance) we conclude that $\lim_{n \to \infty} {\bf d_1}(\mf^n,\mf) = 0$.
\end{proof}


\section{The Hamilton-Jacobi-Bellman equation}\label{sec:4}
This section is devoted to the study of the Hamilton-Jacobi-Bellman equation entering in the Mean Field Game system. Moreover,   we provide a justification of the transition condition at the vertices by proving a verification theorem for a discounted infinite horizon optimal control problem, whose dynamics is given by a controlled sticky diffusion process on the network.

We consider  the discounted Hamilton-Jacobi-Bellman equation
\begin{equation}\label{eq:HJB_disc}
	\begin{cases}
		- \mu_\alpha  \partial^2 u(x) + H(x,\partial u(x)) +\lambda u(x) = F(x) \quad & \textnormal{for all } x \in \Gamma_\alpha \setminus \cV,\, \alpha \in \cA, \\
		u_{|\Gamma_\alpha}(\varv) = u_{|\Gamma_\beta}(\varv) \quad & \textnormal{for all } \alpha, \beta \in \cA_\varv, \, \varv \in  \cV, \\
		  \sum_{\alpha \in \cA_\varv}  \mu_\alpha \gamma_{\varv, \alpha} \partial_\alpha u(\varv) =\eta_\varv (  \theta_\varv-\lambda u(\varv) ) \quad & \textnormal{for all } \varv \in \cV \setminus \partial \cV, \\
		\partial_\alpha u(\varv) = 0 \quad & \textnormal{for all } \varv \in \partial \cV,
	\end{cases}
\end{equation}
and   the corresponding ergodic problem
\begin{equation}\label{eq:HJB}
\begin{cases}
 - \mu_\alpha  \partial^2 u(x) + H(x,\partial u(x)) + \rho = F(x) \quad & \textnormal{for all } x \in \Gamma_\alpha \setminus \cV ,\, \alpha \in \cA, \\
 u_{|\Gamma_\alpha}(\varv) = u_{|\Gamma_\beta}(\varv) \quad & \textnormal{for all } \alpha, \beta \in \cA_\varv, \, \varv \in  \cV, \\
   \sum_{\alpha \in \cA_\varv}   \mu_\alpha \gamma_{\varv, \alpha} \partial_\alpha u(\varv) = \eta_\varv( \theta_\varv-\rho) \quad & \textnormal{for all } \varv \in \cV \setminus \partial \cV, \\
 \partial_\alpha u(\varv) = 0 \quad & \textnormal{for all } \varv \in \partial \cV.
\end{cases}
\end{equation}
 The second line in \eqref{eq:HJB_disc} and \eqref{eq:HJB} gives the continuity of $u$ at the vertices, while the following two lines are    a generalized Kirchhoff condition at the internal vertices and a Neumann condition at the boundary vertices.\\
We will make the following assumptions.
\begin{enumerate}[label={\bf (H\arabic*)}]
	\item \label{h:HJB1}
	 We assume that $\lambda > 0$ and that, for each   $\varv \in \cV \setminus \partial \cV$ and $\alpha \in \cA_\varv$,  $\mu_\alpha>0$, $\gamma_{\varv,\alpha}>0$, $\eta_\varv\ge 0$  and $\theta_\varv\in\RR$. Moreover
	\[
	\sum_{\alpha \in \cA_\varv} \mu_{\alpha} \gamma_{\varv,\alpha} = 1 \quad \textnormal{for all } \varv \in \cV .
	\]
\item  For each $\alpha \in \cA$,
 \begin{itemize}
  \item \label{h:HJB2} The mapping $H_{\alpha} \colon \Gamma_\alpha \times \RR \to \RR$, defined by
 \begin{equation*}
  H_\alpha(x,p) = \begin{cases}
                   H(x,p) \quad & \textnormal{if } x \in \Gamma_\alpha \setminus \cV, \\
                   \lim\limits_{\substack{y \to x \\ y \in \Gamma_\alpha \setminus \cV}} H(y,p) \quad & \textnormal{if } x \in \cV,
                  \end{cases}
 \end{equation*}
 is well-defined and continuous; moreover, for each $x \in \Gamma_\alpha \setminus \cV$, the mapping $p \mapsto H_\alpha(x,p)$ is continuously differentiable and $\partial_p H_\alpha$ can be extended to a continuous mapping on $\Gamma_\alpha \times \RR$.

 \item   There exists a constant $C_H > 0$ and $q \in (1,2]$ such that
 \begin{align}
  \module{H_\alpha(x,p)} \leq C_H \left ( 1 + \module{p}^q \right) \quad & \textnormal{for all } (x,p) \in \Gamma_\alpha \times \RR, \label{eq:H_growth} \\
  \module{\partial_p H_{\alpha}(x,p)} \leq C_H \left (1 + \module{p}^{q-1} \right) \quad & \textnormal{for all } (x,p) \in \Gamma_\alpha \times \RR, \label{eq:H_p_growth} \\
  H_\alpha(x,p) \geq C_H^{-1} \module{p}^q - C_H \quad & \textnormal{for all } (x,p) \in \Gamma_\alpha \times \RR. \label{eq:H_coercive}
 \end{align}
 \end{itemize}
\item \label{h:HJB3} $F$ belongs to $PC^{\varsigma} (\Gamma)$ for some $\varsigma \in (0,1)$.
 \end{enumerate}

\begin{prop}
Assume \ref{h:HJB1}-\ref{h:HJB3}. Then, there exists a  solution $u \in \cC^{2,\varsigma}(\Gamma)$ to \eqref{eq:HJB_disc}.
\end{prop}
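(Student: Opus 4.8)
The plan is to realise $u$ as a fixed point of a compact operator and then push through the \emph{a priori} estimates, the delicate one being a uniform bound on $\partial u$. For $w\in\cC^1(\Gamma)$ let $\mathcal T(w)=u$ be the solution of the \emph{linear} problem obtained from \eqref{eq:HJB_disc} by freezing the nonlinear term: $-\mu_\alpha\partial^2 u+\lambda u=F-H_\alpha(\cdot,\partial w)$ on each edge, $u$ continuous at the vertices, $\sum_{\alpha\in\cA_\varv}\mu_\alpha\gamma_{\varv,\alpha}\partial_\alpha u(\varv)+\lambda\eta_\varv u(\varv)=\eta_\varv\theta_\varv$ for $\varv\in\cV\setminus\partial\cV$, and $\partial_\alpha u(\varv)=0$ for $\varv\in\partial\cV$. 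Since $\lambda>0$ this problem is uniquely solvable (a Lax--Milgram argument in the subspace of $\sob{1}{2}$ encoding continuity at the vertices, followed by edgewise elliptic regularity, using that $x\mapsto H_\alpha(x,\partial w(x))$ is continuous by \ref{h:HJB2}), with $u\in\cC^2(\Gamma)$ and $\norm{u}{\cC^2(\Gamma)}\le C(\norm{w}{\cC^1(\Gamma)})$. Thus $\mathcal T\colon\cC^1(\Gamma)\to\cC^1(\Gamma)$ is continuous and compact (bounded subsets of $\cC^2(\Gamma)$ are precompact in $\cC^1(\Gamma)$ by Arzel\`a--Ascoli on each edge). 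Since the linear solver is affine--linear in its data, the solutions of $u=\sigma\mathcal T(u)$, $\sigma\in[0,1]$, are precisely the $\cC^2(\Gamma)$ solutions of \eqref{eq:HJB_disc} with $(H,F,\theta_\varv)$ replaced by $(\sigma H,\sigma F,\sigma\theta_\varv)$; for $\sigma=0$ the only such solution is $u\equiv0$. By the Leray--Schauder (Schaefer) fixed point theorem it then suffices to bound this set in $\cC^1(\Gamma)$ uniformly in $\sigma$.

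\emph{Sup-norm bound.} Let $u$ solve \eqref{eq:HJB_disc} with data scaled by $\sigma$, and let $\bar x$ realise $\max_\Gamma u$. If $\bar x$ is interior to an edge then $\partial^2 u(\bar x)\le0$, $\partial u(\bar x)=0$, and \eqref{eq:H_growth} gives $\lambda u(\bar x)\le\norm{F}{L^\infty(\Gamma)}+C_H$; the same holds at a boundary vertex using the Neumann condition and a one-sided Taylor expansion. If $\bar x=\varv$ is an internal vertex, then all outward derivatives $\partial_\alpha u(\varv)$ are $\ge0$, so the generalized Kirchhoff condition forces $\eta_\varv(\sigma\theta_\varv-\lambda u(\varv))\ge0$; hence $\lambda u(\varv)\le\sigma\theta_\varv$ if $\eta_\varv>0$, and if $\eta_\varv=0$ then all $\partial_\alpha u(\varv)$ vanish and we conclude as before. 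The symmetric argument at a minimum (using the upper growth bound on $H_\alpha(\cdot,0)$) gives $\norm{u}{\cC(\Gamma)}\le K$, with $K$ independent of $\sigma$.

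\emph{Gradient bound — the main obstacle.} With $\norm{u}{\cC(\Gamma)}\le K$, the edge equations read $\mu_\alpha\partial^2 u=\sigma H_\alpha(x,\partial u)+\lambda u-\sigma F$. At an interior point where $|\partial u|$ is maximal one has $\partial^2 u=0$, so the coercivity of $H_\alpha$ bounds $|\partial u|$ there. The real difficulty is to control the outward derivatives at the vertices. If some $\partial_\alpha u(\varv)$ were very negative, then reading the $\alpha$-edge equation as a scalar ODE for a suitably signed multiple $g=\pm\partial u_\alpha$ (in the arclength variable measured from $\varv$, so $g$ runs from $\varv$ into $\Gamma_\alpha$ and starts large and positive), coercivity yields $g'\ge c\,|g|^{q}$ as long as $g$ exceeds a fixed threshold; since $q>1$ this forces $g$ to blow up within a length that tends to $0$ as the initial value grows, contradicting $g\in\cC^1([0,L_\alpha])$ once $|\partial_\alpha u(\varv)|$ is large enough. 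Hence $\partial_\alpha u(\varv)\ge-C_0$ for all $\varv$ and $\alpha\in\cA_\varv$ (trivially so at boundary vertices). Feeding this lower bound into $\sum_{\alpha\in\cA_\varv}\mu_\alpha\gamma_{\varv,\alpha}\partial_\alpha u(\varv)=\eta_\varv(\sigma\theta_\varv-\lambda u(\varv))$ and using $\sum_{\alpha\in\cA_\varv}\mu_\alpha\gamma_{\varv,\alpha}=1$ rules out any single $\partial_\alpha u(\varv)$ being very positive. Together with the interior bound this gives $\norm{\partial u}{\cC(\Gamma)}\le\bar C$. The constant $c$ above degenerates as $\sigma\to0$ (and for $q=2$ the interior bound is likewise inconclusive for small $\sigma$); for those $\sigma$ one closes the estimate instead by the interpolation inequality $\norm{\partial u_\alpha}{L^\infty}\le C(\norm{\partial^2 u_\alpha}{L^\infty}^{1/2}\norm{u_\alpha}{L^\infty}^{1/2}+\norm{u_\alpha}{L^\infty})$ combined with $\norm{\partial^2 u_\alpha}{L^\infty}\le C(1+\sigma\norm{\partial u_\alpha}{L^\infty}^{q})$, the gradient term being absorbed since $q\le2$ and the prefactor is $O(\sigma^{1/2})$ (for $q<2$ this already works for every $\sigma\in[0,1]$). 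In all cases the bound is uniform in $\sigma$.

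\emph{Conclusion.} The two estimates yield a uniform $\cC^1(\Gamma)$ bound on the fixed-point set, so $\mathcal T$ has a fixed point $u\in\cC^2(\Gamma)$ solving \eqref{eq:HJB_disc}. Rewriting the edge equations with the (now Lipschitz) function $\partial u_\alpha$ and the H\"older datum $F\in PC^{\varsigma}(\Gamma)$ of \ref{h:HJB3}, Schauder estimates on each $[0,L_\alpha]$ finally upgrade the solution to $\cC^{2,\varsigma}(\Gamma)$. The step I expect to be the main obstacle is the uniform gradient estimate — specifically controlling $\partial_\alpha u(\varv)$ at the internal vertices through the generalized Kirchhoff relation, and handling the critical exponent $q=2$.
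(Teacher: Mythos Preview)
Your approach is correct and takes a genuinely different route from the paper's. You run a Leray--Schauder argument on the whole network, the heavy lifting being the uniform $\cC^1$ a~priori bound --- obtained via the Riccati-type blow-up for the outward derivatives, fed back through the Kirchhoff relation for the upper bound, and patched with Gagliardo--Nirenberg interpolation for small $\sigma$. The paper instead \emph{decouples} the edges: it parametrises candidate solutions by their values $z\in\RR^{\cV\setminus\partial\cV}$ at the internal vertices, solves on each edge a classical Dirichlet (or Dirichlet--Neumann) problem for the quasi-linear ODE --- so all gradient estimates are the standard single-interval ones, cited from the literature --- and then applies the Poincar\'e--Miranda theorem to the finite-dimensional map $z\mapsto\big(\eta_\varv\lambda z_\varv+\sum_{\alpha\in\cA_\varv}\mu_\alpha\gamma_{\varv,\alpha}\partial_\alpha u^z(\varv)\big)_{\varv}$ to produce a $z^\star$ at which the generalized Kirchhoff conditions hold; the required sign conditions on the faces of the cube $[-M,M]^{\cV\setminus\partial\cV}$ come from comparison with constants. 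The paper's route is shorter because it offloads the delicate estimate to existing one-dimensional theory and reduces the transition conditions to a finite-dimensional intermediate-value problem; your route is self-contained and makes explicit how the Kirchhoff relation interacts with the gradient bound, which is instructive. One technical caveat on your linear solver: integration by parts against continuous test functions yields $\sum_{\alpha\in\cA_\varv}\mu_\alpha\,\partial_\alpha u(\varv)$, \emph{not} $\sum_{\alpha\in\cA_\varv}\mu_\alpha\gamma_{\varv,\alpha}\,\partial_\alpha u(\varv)$, so the naive Lax--Milgram in $\cssob$ does not directly encode the generalized Kirchhoff condition with the weights $\gamma_{\varv,\alpha}$; you need either a weighted variational formulation (test against $\psi v$ with $\psi_{|\Gamma_\alpha}(\varv)=\gamma_{\varv,\alpha}$, as in the paper's treatment of the Fokker--Planck equation) or, more simply, uniqueness via the comparison principle of \cref{prop:comparison} together with a direct edge-by-edge construction and matching at the vertices.
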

\begin{proof}
 The following argument was introduced for star-shaped networks in \cite{O2021} and extended  to general networks with Kirchhoff conditions in \cite{BLT2024}. Let us write $\mathbb{V} = \cV \setminus \partial \cV$. Let $\Phi \colon \RR^{\mathbb{V}} \to\cC^2(\Gamma)$ with $\Phi(z) = u^z$, where
 \begin{equation}\label{eq:Ohavi-trick}
  \begin{cases}
   -\mu_\alpha \partial^2 u_\alpha^z(x) + H_\alpha(x, \partial u_\alpha^z(x)) + \lambda u_\alpha^z(x) =  F_\alpha(x) \quad \textnormal{for all } x\in \Gamma_\alpha\setminus\cV, \\
   u_\alpha^z(0) = z_\varv \textnormal{ if } \varv = \pi_\alpha(0) \in \mathbb{V} \textnormal{ and } \partial u_\alpha^z(0) = 0 \textnormal{ if } \varv \in \partial \cV, \\
   u_\alpha^z(L_\alpha) = z_\varv \textnormal{ if } \varv = \pi_\alpha(L_\alpha) \in \mathbb{V} \textnormal{ and } \partial u_\alpha^z(L_\alpha) = 0 \textnormal{ if } \varv \in \partial \cV.
  \end{cases}
 \end{equation}
 System  \eqref{eq:Ohavi-trick} is a family of  PDEs defined for each edge $\Gamma_\alpha$ coupled by means of the boundary value  $u^z_\alpha(\varv)=z_\varv$ for all $\alpha \in \cA_\varv$, $\varv\in \mathbb{V}$.
  From the standard theory of quasi-linear elliptic equations we know that this mapping is well-defined and that
 \[
  \max_{\alpha \in \cA} \norm{u_\alpha^z}{\cC^{2,\varsigma}([0,L_\alpha])} \leq C \left ( \module{z}_{\infty} + \max_{\alpha \in \cA} \norm{F_\alpha}{\cC^{0,\varsigma}([0,L_\alpha])} \right ).
 \]
 A straightforward adaptation of \cite[Proposition B.1]{O2021} shows that $\Phi$ is continuous. We then set
 \[
  K := \max \left \{ \norm{F}{L^\infty} + \norm{H(\cdot, 0)}{L^\infty}, \max_{\varv \in \cV}  \module{\theta_\varv} \right \}.
 \]
 For every $z \in \RR^\mathbb{V}$, one can see that the constant function
 \[
  \phi_\alpha^z := \max \left \{ K/\lambda,\, \max_{\varv \in \cV} \module{z} \right \}
 \]
 is a super-solution to \eqref{eq:Ohavi-trick} for every $\alpha \in \cA$. From a standard comparison  principle in each $\Gamma_\alpha$ we deduce that $u_\alpha^z \leq \phi_\alpha^z$ for every $\alpha \in \cA$. Fix $\varv_0 \in \cV \setminus \partial \cV$, $M = \max \{ K/\lambda, K \}$ and $z^0$ be such that
 \[
  \begin{cases}
   z^0_\varv = M \quad & \textnormal{if } \varv = \varv_0, \\
   z^0_\varv \leq  M \quad & \textnormal{otherwise.}
  \end{cases}
 \]
 In this case we have $\phi_\alpha^{z^0} \equiv M$ for every $\alpha \in \cA$. Since $u_\alpha^{z_0} \leq \phi_\alpha^{z_0}$ and $u_\alpha^{z_0}(\pi_\alpha^{-1}(\varv_0)) = M = \phi_\alpha^{z_0}(\pi_\alpha^{-1}(\varv_0))$,  it follows that $\pi_\alpha^{-1}(\varv_0)$ is a maximum point of $u^{z^0}_\alpha$ for every $\alpha \in \cA_{\varv_0}$, and therefore $  \partial u_\alpha^{z^0}(\varv_0) \geq 0$. Hence, recalling the definition of $K$,    we have
 \[
  \eta_\varv \lambda z^0_\varv + \sum_{\alpha \in \cA_\varv}   \mu_\alpha \gamma_{\varv, \alpha} \partial u_\alpha^{z^0}(\pi_\alpha^{-1}(\varv)) \geq \eta_\varv \theta_\varv.
 \]
 An analogous conclusion can be repeated for every $\varv \in \mathbb V$. Similarly, if $ z^0$ is such that
 \[
  \begin{cases}
   z^0_\varv = -M \quad & \textnormal{if } \varv = \varv_0, \\
   z^0_\varv \geq - M \quad & \textnormal{otherwise.}
  \end{cases}
 \]
 we obtain, by replacing $\phi_\alpha^z$ with $- \phi_\alpha^z$ which is a sub-solution to \eqref{eq:Ohavi-trick} for every $\alpha \in \cA$, that
 \[
  \eta_\varv \lambda z^0_{\varv_0} + \sum_{\alpha \in \cA_\varv}  \mu_\alpha \gamma_{\varv, \alpha} \partial u_\alpha^{z^0}(\pi_\alpha^{-1}(\varv_0)) \leq \eta_\varv \theta_\varv.
 \]
 For each $\varv \in \mathbb V$ we consider the continuous mapping $\Psi_\varv \colon \cC^1(\Gamma) \to \RR$ defined by
 \[
  \Psi_\varv (u) = \eta_\varv \lambda u (\varv) + \sum_{\alpha \in \cA_\varv} \mu_\alpha \gamma_{\varv, \alpha} \partial_\alpha u(\varv).
 \]

 We have proven that, for each $\varv \in \mathbb{V}$, the mapping $\Psi_\varv \circ \Phi \colon [-M,M]^{\mathbb{V}} \to \RR$ is continuous and satisfies
 \[
  \begin{cases}
   \Psi_\varv \circ \Phi(z) \geq \eta_\varv \theta_\varv \quad & \textnormal{if } z_\varv = M, \\
   \Psi_\varv \circ \Phi(z) \leq \eta_\varv \theta_\varv \quad & \textnormal{if } z_\varv = - M.
  \end{cases}
 \]
 We can apply the Poincaré-Miranda theorem \cite{M1940,M2013} to the mapping $[-M,M]^{\mathbb{V}} \ni z \mapsto \left (\Psi_\varv \circ \Phi(z) \right)_{\varv \in \mathbb V} \in \RR^{\mathbb V}$ to conclude that there exists $z^\star \in [-M,M]^{\mathbb V}$ such that $\Psi_\varv \circ \Phi(z^\star) = \eta_\varv \theta_\varv$ for every $\varv \in \mathbb V$. This concludes the proof.
\end{proof}
To show the uniqueness of the solution to \eqref{eq:HJB_disc}, we prove a comparison principle.
\begin{defi}[Sub- and Super-solutions]
\hfill
  \begin{enumerate}
   \item We say that $u \in \cC^2(\Gamma)$ is a sub-solution to \eqref{eq:HJB_disc} if
  \begin{equation*}
  \begin{cases}
  - \mu_\alpha  \partial^2 u(x) + H(x,\partial u(x)) +\lambda u(x) \leq F(x) \quad & \textnormal{for all } x \in \Gamma_\alpha \setminus \cV,\, \alpha \in \cA, \\
  \sum_{\alpha \in \cA_\varv} \mu_\alpha \gamma_{\varv, \alpha} \partial_\alpha u(\varv) \leq \eta_{\varv} \left (\theta_\varv - \lambda u(\varv) \right ) \quad & \textnormal{for all } \varv \in \cV \setminus \partial \cV, \\
  \partial_\alpha u(\varv) \leq 0 \quad & \textnormal{for all } \varv \in \partial \cV.
  \end{cases}
\end{equation*}
  \item We say that $v \in \cC^2(\Gamma)$ is a super-solution to \eqref{eq:HJB_disc} if
  \begin{equation*}
  \begin{cases}
  - \mu_\alpha  \partial^2 v(x) + H(x,\partial v(x)) +\lambda v(x) \geq F(x) \quad & \textnormal{for all } x \in \Gamma_\alpha \setminus \cV,\, \alpha \in \cA, \\
  \sum_{\alpha \in \cA_\varv} \mu_\alpha \gamma_{\varv, \alpha} \partial_\alpha v(\varv) \geq \eta_\varv \left (\theta_\varv - \lambda v(\varv) \right )\quad & \textnormal{for all } \varv \in \cV \setminus \partial \cV, \\
  \partial_\alpha v(\varv) \geq 0 \quad & \textnormal{for all } \varv \in \partial \cV.
  \end{cases}
\end{equation*}
  \end{enumerate}

\end{defi}

\begin{prop}[Comparison principle]\label{prop:comparison}
 Assume \ref{h:HJB1}-\ref{h:HJB2} and let $u$, $v \in \cC^2(\Gamma)$ be sub- and super-solution to \eqref{eq:HJB_disc}, respectively. Then $u \leq v$ in $\Gamma$. In particular there exists at most one classical solution to \eqref{eq:HJB_disc}.
\end{prop}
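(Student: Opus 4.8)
The plan is to argue by contradiction, examining the maximum of $w := u - v$ over the compact network $\Gamma$. Suppose $M := \max_\Gamma w > 0$ and let $x_0 \in \Gamma$ be a point where this maximum is attained. On each edge $\Gamma_\alpha$, subtracting the super-solution inequality from the sub-solution inequality and using \ref{h:HJB2} to write $H_\alpha(x,\partial u_\alpha(x)) - H_\alpha(x,\partial v_\alpha(x)) = c_\alpha(x)\,\partial w_\alpha(x)$ with
\[
c_\alpha(x) := \int_0^1 \partial_p H_\alpha\bigl(x,\ \partial v_\alpha(x) + s\,\partial w_\alpha(x)\bigr)\, ds,
\]
which is continuous on $[0,L_\alpha]$ since $u_\alpha, v_\alpha \in \cC^2([0,L_\alpha])$ and $\partial_p H_\alpha$ is continuous, one obtains
\[
\mu_\alpha \partial^2 w_\alpha(x) \ \geq\ \lambda\, w_\alpha(x) + c_\alpha(x)\, \partial w_\alpha(x) \qquad \textnormal{for all } x \in (0,L_\alpha),
\]
an inequality which extends by continuity to the closed interval $[0,L_\alpha]$.

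The heart of the argument is the observation that \emph{if $x_0 = \pi_\alpha(y_0)$ for some $\alpha \in \cA$ and $\partial w_\alpha(y_0) = 0$, then a contradiction follows}. Indeed, evaluating the previous inequality at $y_0$ gives $\mu_\alpha \partial^2 w_\alpha(y_0) \geq \lambda M > 0$ — here we use $\lambda > 0$ and $\mu_\alpha > 0$ from \ref{h:HJB1} — hence $\partial^2 w_\alpha(y_0) > 0$. Since $\partial^2 w_\alpha$ is continuous it remains positive in a neighbourhood of $y_0$, and a second-order Taylor expansion at $y_0$ together with $\partial w_\alpha(y_0) = 0$ shows that $w_\alpha(y) > w_\alpha(y_0) = M$ for every $y \ne y_0$ sufficiently close to $y_0$ within $[0,L_\alpha]$, contradicting $M = \max_\Gamma w \geq w(\pi_\alpha(y)) = w_\alpha(y)$. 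I expect this step to be the only genuine difficulty: it is precisely what is needed to rule out a \emph{flat} maximum sitting at a vertex, where the transition conditions in the definitions of sub/super-solution only provide $\partial_\alpha w(\varv) = 0$ and no strict sign of the normal derivative (in particular when $\eta_\varv = 0$, so that the gain ``$-\eta_\varv\lambda M$'' below is absent); the positivity of $\lambda$ forcing strictly positive curvature of $w_\alpha$ at a positive maximum is what saves the day.

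It remains to check that $\partial w_\alpha = 0$ at $x_0$ in each possible location of the maximiser. If $x_0 \in \Gamma_\alpha \setminus \cV$, then $\pi_\alpha^{-1}(x_0)$ is an interior maximum of $w_\alpha$, hence a critical point. If $x_0 = \varv \in \partial \cV$, let $\alpha$ be the unique index in $\cA_\varv$: since $w_\alpha$ is maximised at the endpoint $\pi_\alpha^{-1}(\varv)$, its outward derivative satisfies $\partial_\alpha w(\varv) \geq 0$, while the Neumann conditions give $\partial_\alpha w(\varv) = \partial_\alpha u(\varv) - \partial_\alpha v(\varv) \leq 0$, so $\partial_\alpha w(\varv) = 0$, i.e. $\partial w_\alpha(\pi_\alpha^{-1}(\varv)) = 0$. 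If $x_0 = \varv \in \cV \setminus \partial \cV$, then $\partial_\alpha w(\varv) \geq 0$ for every $\alpha \in \cA_\varv$, whence $\sum_{\alpha \in \cA_\varv} \mu_\alpha \gamma_{\varv,\alpha}\,\partial_\alpha w(\varv) \geq 0$; on the other hand, subtracting the two generalized Kirchhoff conditions gives
\[
\sum_{\alpha \in \cA_\varv} \mu_\alpha \gamma_{\varv,\alpha}\,\partial_\alpha w(\varv) \ \leq\ \eta_\varv\bigl(\lambda v(\varv) - \lambda u(\varv)\bigr) \ =\ -\,\eta_\varv \lambda M \ \leq\ 0,
\]
and since $\mu_\alpha \gamma_{\varv,\alpha} > 0$ with each summand nonnegative, we conclude $\partial_\alpha w(\varv) = 0$ for all $\alpha \in \cA_\varv$ (incidentally $\eta_\varv\lambda M \leq 0$, which already forces $M = 0$ when $\eta_\varv > 0$). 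In every case the observation of the preceding paragraph applies and produces a contradiction, so $M \leq 0$, i.e. $u \leq v$ on $\Gamma$. Finally, a classical solution of \eqref{eq:HJB_disc} is simultaneously a sub- and a super-solution, whence two classical solutions must coincide.
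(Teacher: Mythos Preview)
Your proof is correct, but it follows a different route from the paper's. The paper linearizes in the same way and then invokes the strong maximum principle and Hopf's lemma (\cite[Theorem 3.5, Lemma 3.4]{GT2001}) as black boxes: for each edge incident to the maximiser, either $w$ is constant on that edge (and then the linearized equation immediately gives $\lambda M \le 0$), or the outward derivative is \emph{strictly} positive, $\partial_\alpha w(\varv) > 0$, which directly contradicts the Neumann condition at $\partial\cV$ and, at an interior vertex, forces $\sum_\alpha \mu_\alpha\gamma_{\varv,\alpha}\partial_\alpha w(\varv) > 0 \ge -\eta_\varv\lambda M$.

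Your argument is more elementary and self-contained: you never need the strict sign from Hopf's lemma. Instead you show that the boundary and junction inequalities, combined with the endpoint-maximum condition $\partial_\alpha w(\varv)\ge 0$, pin down $\partial w_\alpha = 0$ at the maximiser, and then exploit the PDE inequality itself to produce $\partial^2 w_\alpha(y_0) \ge \lambda M/\mu_\alpha > 0$, turning the maximum into a strict local minimum via Taylor. This makes the role of $\lambda > 0$ completely explicit and handles the degenerate case $\eta_\varv = 0$ (where the junction condition gives no strict gain) in the same stroke as everything else. The paper's approach is shorter to write because it outsources the work to classical results, while yours trades that brevity for a fully hands-on argument that uses nothing beyond $\cC^2$ regularity.
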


\begin{proof}
 Set $w = u - v$. Clearly $w \in \cC^2(\Gamma)$  satisfies
 \begin{equation}\label{eq:CP_linearized}
 \begin{cases}
  - \mu_\alpha \partial^2 w_\alpha + b_\alpha \partial w_\alpha + \lambda w_\alpha \leq 0 \quad & \textnormal{in } (0,L_\alpha) \textnormal{ for all } \alpha \in \cA, \\
   \eta_{\varv} \lambda w(\varv)+ \sum_{\alpha \in \cA_\varv} \mu_\alpha \gamma_{\varv, \alpha} \partial_\alpha w(\varv) \leq 0 \quad & \textnormal{for all } \varv \in \cV \setminus \partial \cV,\\
  \partial_\alpha w(\varv) \leq 0 \quad &  \textnormal{for all } \varv \in \partial \cV,
  \end{cases}
 \end{equation}
 where
 \[
  b_\alpha(x) = \int_0^1 \partial_p H_\alpha(x, t u_\alpha(x) + (1 - t) v_\alpha(x)) \, dt \quad \textnormal{for all } x \in (0, L_\alpha).
 \]

 Let $x_0$ be a maximum point of $w$, such a point exists since $\Gamma$ is a compact metric space and $w$ is continuous on $\Gamma$. We may assume that $M := w(x_0) > 0$, since otherwise there is nothing to prove. Moreover, because of the strong maximum principle \cite[Theorem 3.5]{GT2001} and Hopf's lemma \cite[Lemma 3.4]{GT2001} we know that either $w$ is constant in $\Gamma_\alpha$, where $\alpha \in \cA$ is such that $x_0 \in \Gamma_\alpha$ or $x_0 \in \cV$ and $\partial_\alpha w(x_0) > 0$. In the first case, \eqref{eq:CP_linearized} directly yields $M \leq 0$, a contradiction. We must therefore have $x_0 = \varv \in \cV$. Using the same argument for each $\alpha \in \cA_\varv$ we are left with the case $\partial_\alpha w(\varv) > 0$ for every $\alpha \in \cA_{\varv}$. In the case where $\nu  \in \partial \cV$, we also directly obtain a contradiction from \eqref{eq:CP_linearized}. We are henceforth left with the case $ \varv \in \cV \setminus \partial \cV$. We have
 \[
  \sum_{\alpha \in \cA_\varv} \mu_\alpha \gamma_{\varv, \alpha} \partial_\alpha w(\varv) > 0
 \]
 so that \eqref{eq:CP_linearized} implies $\eta_{\varv} \lambda M=\eta_{\varv} \lambda w(\varv) < 0$, also a contradiction. We conclude that $M \leq 0$ and, in particular, that $u \leq v$ in $\Gamma$.
\end{proof}

We now turn to the study of \eqref{eq:HJB}. As  usual, we show existence of a solution passing to the limit for $\lambda\to 0^+$ in \eqref{eq:HJB_disc}.
\begin{lem}\label{cor:sup_bound}
 Assume \ref{h:HJB1}-\ref{h:HJB3} and let $u \in \cC^2(\Gamma)$ be the solution to \eqref{eq:HJB_disc}. There exists $C_1 > 0$, depending only on $\norm{F}{L^\infty}$, $( \theta_\varv)_{\varv \in \cV \setminus \partial \cV}$ and $\norm{H(\cdot,0)}{L^\infty}$, such that
 \begin{equation}\label{stima_Linf}
 	 \norm{\lambda u}{L^\infty} \leq   C_1
 \end{equation}.
\end{lem}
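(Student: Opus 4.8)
The plan is to derive \eqref{stima_Linf} directly from the comparison principle of \cref{prop:comparison}, using constant barriers. Set
\[
C_1 := \norm{F}{L^\infty} + \norm{H(\cdot,0)}{L^\infty} + \max_{\varv \in \cV \setminus \partial \cV} \module{\theta_\varv},
\]
which depends only on the quantities allowed in the statement (note that $\norm{H(\cdot,0)}{L^\infty} < \infty$ by \eqref{eq:H_growth}), and consider the constant functions $\bar u \equiv C_1/\lambda$ and $\underline u \equiv -C_1/\lambda$ on $\Gamma$; both belong to $\cC^2(\Gamma)$.

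The first step is to check that $\bar u$ is a super-solution to \eqref{eq:HJB_disc}. Since $\partial \bar u \equiv 0$ and $\partial^2 \bar u \equiv 0$, on each edge the left-hand side of the equation equals $H(x,0) + C_1 \ge H(x,0) + \big(F(x) - H(x,0)\big) = F(x)$, using $F(x) - H(x,0) \le \norm{F}{L^\infty} + \norm{H(\cdot,0)}{L^\infty} \le C_1$. At an interior vertex $\varv$ the Kirchhoff expression $\sum_{\alpha \in \cA_\varv} \mu_\alpha \gamma_{\varv,\alpha}\partial_\alpha \bar u(\varv)$ vanishes, while $\eta_\varv(\theta_\varv - \lambda \bar u(\varv)) = \eta_\varv(\theta_\varv - C_1) \le 0$ because $\eta_\varv \ge 0$ and $C_1 \ge \theta_\varv$; hence the super-solution inequality holds. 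At boundary vertices $\partial_\alpha \bar u(\varv) = 0 \ge 0$, as required. The verification that $\underline u$ is a sub-solution is symmetric: the edge inequality becomes $H(x,0) - C_1 \le F(x)$ (again since $H(x,0)-F(x) \le C_1$), the Kirchhoff inequality reads $0 \le \eta_\varv(\theta_\varv + C_1)$, which holds since $C_1 \ge -\theta_\varv$, and the Neumann condition is again trivially satisfied.

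Finally, since the solution $u$ is in particular both a sub- and a super-solution, applying \cref{prop:comparison} to the pair $(u,\bar u)$ gives $u \le C_1/\lambda$ on $\Gamma$, and applying it to the pair $(\underline u, u)$ gives $-C_1/\lambda \le u$ on $\Gamma$; together these yield $\norm{\lambda u}{L^\infty} \le C_1$. I do not expect a serious obstacle here, as this is a standard barrier/comparison argument; the only points requiring care are choosing $C_1$ so that it simultaneously dominates $F(x)-H(x,0)$ on the edges and $\theta_\varv$ at the vertices, and keeping track of the correct inequality signs in the generalized Kirchhoff and Neumann conditions for the constant barriers — which is precisely where the hypothesis $\eta_\varv \ge 0$ of \ref{h:HJB1} enters.
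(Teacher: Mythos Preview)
Your proposal is correct and follows essentially the same approach as the paper: construct the constant barriers $\pm C_1/\lambda$, verify they are super- and sub-solutions of \eqref{eq:HJB_disc}, and conclude via \cref{prop:comparison}. Your choice of $C_1$ and the verification of the generalized Kirchhoff and Neumann conditions are slightly more explicit than the paper's, but the idea is identical.
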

\begin{proof}
	Setting $C=\max_{x\in\Gamma}|H_\alpha(x,0)-F(x)|$ and $C_1=\max\{C, \max\{\theta_\varv\}_{\varv \in \cV \setminus \partial \cV}\}$, we see that the constant functions $-C_1/\lambda$ and $C_1/\lambda$ are respectively a  sub and a supersolution to \eqref{eq:HJB_disc}. Hence the results follows immediately from \cref{prop:comparison}.
\end{proof}
\begin{lem}\label{prop:gradient_bound}
 Assume \ref{h:HJB1}-\ref{h:HJB3} and let $u \in \cC^2(\Gamma)$ the solution to \eqref{eq:HJB_disc}. There exists $C_2 > 0$, depending only on $\norm{F}{L^\infty}$, $(\module{\theta_\varv})_{\varv \in \cV \setminus \partial \cV}$, $\norm{H(\cdot,0)}{L^\infty}$, $C_H$, $\bgamma$, $\Gamma$ and $q$, such that
 \[
  \norm{\partial u}{L^q} \leq  C_2.
 \]
\end{lem}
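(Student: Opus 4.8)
The plan is to test the equation \eqref{eq:HJB_disc} against the piecewise-affine weight $\psi\in PC(\Gamma)$ introduced in the proof of \cref{lem:FP_estimate}: the function determined by $\psi_{|\Gamma_\alpha}(\varv)=\gamma_{\varv,\alpha}$ for $\varv\in\cV\setminus\partial\cV$, $\alpha\in\cA_\varv$, with $\psi_\alpha$ affine on the edges that do not meet $\partial\cV$ and constant on the others. It satisfies $\min\bgamma\le\psi$, $\norm{\psi}{L^\infty(\Gamma)}+\norm{\partial\psi}{L^\infty(\Gamma)}\le C(\bgamma,\Gamma)$, and — since the normalization $\sum_{\alpha\in\cA_\varv}\mu_\alpha\gamma_{\varv,\alpha}=1$ forces $\mu_\alpha\le 1/\min\bgamma$ — every dependence on $\mu$ below will be subsumed in a dependence on $\bgamma$.

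First I would multiply the interior equation in \eqref{eq:HJB_disc} by $\psi_\alpha$ on each edge $\Gamma_\alpha$, integrate over $(0,L_\alpha)$ and sum over $\alpha\in\cA$. As $u\in\cC^2(\Gamma)$ and $\psi_\alpha$ is affine, integrating the second-order term by parts edge by edge produces boundary contributions $-\sum_{\varv\in\cV}\sum_{\alpha\in\cA_\varv}\mu_\alpha\,\psi_{|\Gamma_\alpha}(\varv)\,\partial_\alpha u(\varv)$: these vanish at boundary vertices by the Neumann condition, while at an interior vertex $\varv$ one has $\psi_{|\Gamma_\alpha}(\varv)=\gamma_{\varv,\alpha}$, so the generalized Kirchhoff condition reduces the inner sum to $-\eta_\varv(\theta_\varv-\lambda u(\varv))$. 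This yields
\[
 \int_\Gamma H(x,\partial u)\,\psi\,dx = \int_\Gamma F\psi\,dx-\int_\Gamma\mu\,\partial u\,\partial\psi\,dx-\int_\Gamma\lambda u\,\psi\,dx+\sum_{\varv\in\cV\setminus\partial\cV}\eta_\varv\big(\theta_\varv-\lambda u(\varv)\big).
\]

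Next I would estimate the two sides. On the left, the coercivity bound $H_\alpha(x,p)\ge C_H^{-1}|p|^q-C_H$ together with $\psi\ge\min\bgamma>0$ gives $\int_\Gamma H(x,\partial u)\psi\,dx\ge \frac{\min\bgamma}{C_H}\norm{\partial u}{L^q}^q-C_H\norm{\psi}{L^\infty}\leb(\Gamma)$. On the right, $\int_\Gamma F\psi\,dx$ is controlled by $\norm{F}{L^\infty}$ and $\Gamma$; by H\"older's and Young's inequalities, $\module{\int_\Gamma\mu\,\partial u\,\partial\psi\,dx}\le\varepsilon\norm{\partial u}{L^q}^q+C_\varepsilon$ with $C_\varepsilon$ depending only on $\varepsilon$, $\bgamma$, $\Gamma$, $q$; the terms $\int_\Gamma\lambda u\,\psi\,dx$ and $\sum_{\varv}\eta_\varv\lambda u(\varv)$ are bounded by means of the estimate $\norm{\lambda u}{L^\infty}\le C_1$ of \cref{cor:sup_bound}; and $\sum_\varv\eta_\varv\theta_\varv$ is bounded directly by $(\module{\theta_\varv})_\varv$. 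Collecting all of this, choosing $\varepsilon=\min\bgamma/(2C_H)$ and absorbing $\varepsilon\norm{\partial u}{L^q}^q$ into the left-hand side, we are left with $\norm{\partial u}{L^q}^q\le C$ for a constant $C$ of the announced form; taking $q$-th roots finishes the proof.

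The delicate point is the vertex bookkeeping. One has to observe that testing against the \emph{discontinuous} weight $\psi$ — discontinuous exactly because $\gamma_{\varv,\alpha}$ depends on the edge — is precisely what makes the integration-by-parts boundary terms reproduce the weighted flux $\sum_{\alpha\in\cA_\varv}\mu_\alpha\gamma_{\varv,\alpha}\partial_\alpha u(\varv)$ of the transition condition in \eqref{eq:HJB_disc}, and that the ensuing $\lambda u(\varv)$ terms, which a priori could be of order $1/\lambda$, are actually $O(1)$ thanks to \cref{cor:sup_bound}. Everything else is routine absorption. Note that the argument uses \ref{h:HJB3} only through $F\in L^\infty(\Gamma)$ and does not use the gradient growth bound \eqref{eq:H_p_growth}.
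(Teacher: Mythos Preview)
Your proof is correct and follows essentially the same route as the paper: test the equation against the piecewise-affine weight $\psi$ with $\psi_{|\Gamma_\alpha}(\varv)=\gamma_{\varv,\alpha}$, integrate by parts so that the boundary terms reproduce the weighted Kirchhoff flux, invoke \cref{cor:sup_bound} to control the $\lambda u$ contributions, and conclude via the coercivity of $H$ together with H\"older/Young. The only cosmetic difference is that the paper bounds $\int_\Gamma|\partial u|$ by $\norm{\partial u}{L^q}^{1/q}\leb(\Gamma)^{(q-1)/q}$ at the end, whereas you absorb the first-order term directly with Young's inequality; your remark that $\mu_\alpha\le 1/\min\bgamma$ (so the $\mu$-dependence is subsumed in $\bgamma$) is a nice touch that the paper leaves implicit.
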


\begin{proof}
 Let $\psi \colon \Gamma \to \RR_+$ be the unique function   affine in $\Gamma_\alpha$ for each $\alpha \in \cA$, $\psi_{|\Gamma_\alpha}(\varv) = \gamma_{\varv,\alpha}$ for all $\varv \in \cV \setminus \partial \cV$ and  $\alpha \in \cA_\varv$ and  $\psi_\alpha$ constant if the edge $\Gamma_\alpha$ touches the boundary. Multiplying by $\psi$ in the equation satisfied by $u$ and integrating by parts we obtain
 \begin{align*}
  \int_{\Gamma} F \psi \, dx & = \int_\Gamma \left (  - \mu  \partial^2 u(x) + H(x,\partial u(x)) +\lambda u(x) \right) \psi \, dx \\
  & = \int_\Gamma \mu \partial u \partial \psi + H(x,\partial u) \psi + \lambda u \psi\, dx - \sum_{\alpha \in \cA} \left [ \mu_\alpha \partial u_\alpha(x) \psi_\alpha(x) \right]_{x=0}^{x=L_\alpha} \\
  &= \int_\Gamma \mu \partial u \partial \psi + H(x,\partial u) \psi + \lambda u \psi\, dx - \sum_{\varv \in \cV \setminus \partial \cV} n_{\varv,\alpha} \mu_\alpha \gamma_{\varv,\alpha} \partial u_{|\Gamma_\alpha}(\varv) \\
  & = \int_\Gamma \mu \partial u \partial \psi + H(x,\partial u) \psi + \lambda u \psi\, dx - \sum_{\varv \in \cV \setminus \partial \cV} \mu_\alpha \gamma_{\varv,\alpha} \partial_\alpha u(\varv) \\
  & = \int_\Gamma \mu \partial u \partial \psi + H(x,\partial u) \psi + \lambda u \psi\, dx + \big (\sum_{\varv \in \cV \setminus \partial \cV} \lambda u(\varv) - \theta_\varv \big).
 \end{align*}
 Notice that
 \[
  0 < \min \bgamma \leq \psi \leq \max \bgamma < + \infty.
 \]
 Using \ref{h:HJB2} and \eqref{stima_Linf}, we deduce that there exists a positive constant $C > 0$, independent of $\lambda$, such that
 \begin{align*}
  \int_\Gamma \module{\partial u}^q\, dx & \leq C \left( \int_\Gamma \module{\partial u} \, dx + C_1 + \max_{\varv} \module{\theta_\varv} + \norm{F}{L^\infty} \right ) \\
  & \leq  C \left( \norm{\partial u}{L^q}^{1/q} \leb(\Gamma)^{(q-1)/q} + C_1 + \max_{\varv} \module{\theta_\varv} +  \norm{F}{L^\infty} \right ).
 \end{align*}
 The conclusion then follows from Young's inequality.
\end{proof}

\begin{thm}\label{thm:HJB_well_posed}
 Under assumptions \ref{h:HJB1}-\ref{h:HJB3}, there exists a solution $(u,\rho) \in \cC^{2,\varsigma}(\Gamma) \times \RR$ to \eqref{eq:HJB}. Furthermore, this solution is unique up to translation of $u$ by a constant. Finally there exists $\hat C > 0$, depending only on $\norm{F}{L^\infty}$, $(\module{\theta_\varv})_{\varv \in \cV \setminus \partial \cV}$ and $\norm{H(\cdot,0)}{L^\infty}$, $C_H$, $\bgamma$, $\Gamma$ and $q$, such that
 \[
  \norm{\partial u}{\Lx{q}} + \norm{u}{\sob{2}{1}} \leq \hat C.
 \]
\end{thm}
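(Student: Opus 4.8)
The plan is to obtain the ergodic pair $(u,\rho)$ as a limit of the discounted solutions $u^\lambda$ to \eqref{eq:HJB_disc} as $\lambda \to 0^+$, following the classical vanishing-discount procedure. First I would fix a reference vertex $\varv_0 \in \cV \setminus \partial \cV$ (or any reference point) and set $v^\lambda := u^\lambda - u^\lambda(\varv_0)$. By \cref{cor:sup_bound} the quantity $\lambda u^\lambda(\varv_0)$ is bounded uniformly in $\lambda$, so along a subsequence $\lambda u^\lambda(\varv_0) \to \rho$ for some $\rho \in \RR$; simultaneously $\lambda \norm{u^\lambda}{L^\infty} \leq C_1$ gives $\lambda v^\lambda \to 0$ uniformly. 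By \cref{prop:gradient_bound}, $\norm{\partial v^\lambda}{L^q} = \norm{\partial u^\lambda}{L^q} \leq C_2$ uniformly, and since $v^\lambda(\varv_0) = 0$ and $\Gamma$ is connected and bounded, a Poincaré–Wirtinger inequality on the network yields a uniform bound on $\norm{v^\lambda}{W^{1,q}(\Gamma)}$.

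The next step is to upgrade this to the $\cC^{2,\varsigma}$ bound. Writing the equation on each edge as $-\mu_\alpha \partial^2 v^\lambda_\alpha = F_\alpha - H_\alpha(x,\partial v^\lambda_\alpha) - \lambda u^\lambda_\alpha$, the right-hand side is bounded in $L^{q/(q-1)}(0,L_\alpha)$ by the growth assumption \eqref{eq:H_growth} (using $\lambda u^\lambda$ bounded in $L^\infty$ and $\partial v^\lambda \in L^q$), hence $v^\lambda_\alpha$ is bounded in $W^{2, q/(q-1)}(0,L_\alpha)$; the one-dimensional Sobolev embedding then gives a uniform $\cC^{1,\varsigma'}$ bound on each edge for some $\varsigma' > 0$, so $\partial v^\lambda_\alpha$ is uniformly Hölder, which makes $x \mapsto H_\alpha(x,\partial v^\lambda_\alpha(x))$ uniformly $\cC^{0,\varsigma''}$. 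Feeding this back into the edge equation and applying Schauder estimates (interior plus up to the endpoints, using the $\cC^{1,\varsigma'}$ bound to control boundary terms) gives a uniform bound $\max_\alpha \norm{v^\lambda_\alpha}{\cC^{2,\varsigma}([0,L_\alpha])} \leq C$. By Arzelà–Ascoli we extract a further subsequence with $v^\lambda_\alpha \to u_\alpha$ in $\cC^2([0,L_\alpha])$ for every $\alpha$; continuity at the vertices is preserved in the limit, so $u \in \cC^{2,\varsigma}(\Gamma)$. Passing to the limit in each line of \eqref{eq:HJB_disc}: the edge equation becomes the first line of \eqref{eq:HJB} with the constant $\rho$ (since $\lambda u^\lambda \to \rho$ uniformly — note here one must check $\lambda u^\lambda \to \rho$ not merely at $\varv_0$; this follows because $\lambda v^\lambda \to 0$ uniformly), and the Kirchhoff condition $\sum_{\alpha \in \cA_\varv} \mu_\alpha \gamma_{\varv,\alpha}\partial_\alpha u^\lambda(\varv) = \eta_\varv(\theta_\varv - \lambda u^\lambda(\varv))$ passes to $\sum_{\alpha \in \cA_\varv} \mu_\alpha \gamma_{\varv,\alpha}\partial_\alpha u(\varv) = \eta_\varv(\theta_\varv - \rho)$ because the derivatives converge; similarly the Neumann conditions at $\partial\cV$ pass to the limit. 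This establishes existence, and the final estimate $\norm{\partial u}{\Lx q} + \norm{u}{\sob 2 1} \leq \hat C$ is inherited from the uniform bounds (note $\partial u$ is translation-invariant, so the estimate on $\norm{\partial u}{L^q}$ holds for any representative, while $\norm{u}{W^{2,1}}$ should be read for the normalized solution $u(\varv_0)=0$).

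For uniqueness up to an additive constant, suppose $(u_1,\rho_1)$ and $(u_2,\rho_2)$ are two solutions. I would first show $\rho_1 = \rho_2$: integrating the edge equation against the weight $\psi$ from the proof of \cref{prop:gradient_bound} and using the Kirchhoff condition (as in that proof) gives the identity $\rho_i \, \leb(\Gamma) + \sum_{\varv \in \cV\setminus\partial\cV}\eta_\varv(\rho_i - \theta_\varv) = \int_\Gamma \mu \partial u_i \partial\psi + H(x,\partial u_i)\psi\,dx - \int_\Gamma F\psi\,dx$; but this alone does not immediately separate $\rho_i$ from $u_i$. The cleaner route is a comparison-type argument: set $w = u_1 - u_2$; then $w$ satisfies a linear inequality on each edge of the form $-\mu_\alpha \partial^2 w_\alpha + b_\alpha \partial w_\alpha = \rho_2 - \rho_1$ (with $b_\alpha$ the usual integrated gradient of $H_\alpha$) together with $\sum_{\alpha\in\cA_\varv}\mu_\alpha\gamma_{\varv,\alpha}\partial_\alpha w(\varv) = \eta_\varv(\rho_2 - \rho_1)$ and $\partial_\alpha w(\varv)=0$ on $\partial\cV$. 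If $\rho_1 < \rho_2$, evaluating at an interior maximum of $w$ and arguing exactly as in the proof of \cref{prop:comparison} (strong maximum principle, Hopf's lemma, then summing the Kirchhoff condition which forces the left side $>0$ while the right side is $\eta_\varv(\rho_2-\rho_1)\geq 0$ — here I would need a slightly more careful argument since the signs now go the other way, so instead I would look at the \emph{minimum} of $w$ or compare $u_1$ with $u_2 + Ct$-type shifts) yields a contradiction; symmetrically $\rho_1 > \rho_2$ is excluded, so $\rho_1 = \rho_2$. Then $w$ is $\cG$-subharmonic and superharmonic in the appropriate sense with homogeneous Kirchhoff/Neumann data, and the strong maximum principle plus Hopf's lemma — applied as in \cref{prop:comparison} but now to both $w$ and $-w$ — forces $w$ to be constant on each edge, hence (by continuity at the vertices and connectedness of $\Gamma$) globally constant.

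The main obstacle I anticipate is the uniqueness of $\rho$: the comparison principle \cref{prop:comparison} crucially uses the zeroth-order term $\lambda u$, which disappears in the ergodic equation, so one cannot invoke it directly. The standard fix is to compare $u_1 - u_2$ at its extremum using the sign of $\rho_1 - \rho_2$ propagated through the edge equations and the Kirchhoff relation $\sum \mu_\alpha\gamma_{\varv,\alpha}\partial_\alpha w(\varv) = \eta_\varv(\rho_2-\rho_1)$; when $\eta_\varv = 0$ for some vertex this relation becomes the homogeneous Kirchhoff condition and the Hopf-lemma contradiction from \cref{prop:comparison} applies verbatim, but when all $\eta_\varv > 0$ one needs to track the sign carefully, possibly combining the extremum argument with the integrated identity above. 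A secondary technical point is ensuring the Schauder bootstrap is uniform in $\lambda$ up to the vertices; this is where the argument of \cite{O2021,BLT} invoked in the preceding proposition is used, and I would cite it rather than redo it.
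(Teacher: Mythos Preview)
Your approach is essentially the paper's: vanishing discount for existence, then a maximum-principle argument on $w=u_1-u_2$ for uniqueness of $\rho$ and of $u$ up to constants. Two small corrections. First, in your bootstrap the growth bound $|H_\alpha(x,p)|\le C_H(1+|p|^q)$ together with $\partial v^\lambda\in L^q$ only places $H_\alpha(\cdot,\partial v^\lambda_\alpha)$ in $L^1$, not in $L^{q/(q-1)}$; the paper therefore first obtains $v^\lambda_\alpha\in W^{2,1}(0,L_\alpha)$, uses the one-dimensional embedding $W^{2,1}\hookrightarrow\cC^1$ to make $\partial v^\lambda_\alpha$ bounded in $L^\infty$, and only then bootstraps to $\cC^{2,\varsigma}$. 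Second, your hesitation about signs in the uniqueness step disappears once you fix the sign at the outset, as the paper does: assume without loss of generality $\bar\rho:=\rho_1-\rho_2\ge 0$, so that $-\mu_\alpha\partial^2 w_\alpha+b_\alpha\partial w_\alpha=-\bar\rho\le 0$ on each edge and $\eta_\varv\bar\rho+\sum_{\alpha\in\cA_\varv}\mu_\alpha\gamma_{\varv,\alpha}\partial_\alpha w(\varv)=0$ at interior vertices; evaluating at a global maximum of $w$ then yields $\bar\rho\le 0$ directly (interior case from $\partial^2 w\le 0$, $\partial w=0$; vertex case from $\partial_\alpha w(\varv)\ge 0$), hence $\bar\rho=0$, after which the strong maximum principle and Hopf's lemma force $w$ to be constant. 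There is no need to alternate between maxima and minima or to invoke the integrated identity against $\psi$.
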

\begin{proof} The argument follows the lines of \cite[Theorem 3.4]{ADLT2019}.

 \underline{Proof of Existence} : For each $k \in \NN$ we set $\lambda_k = 2^{-k}$ and consider $v_k \in \cC^{2,\varsigma}(\Gamma)$ the unique solution to \eqref{eq:HJB_disc} with $\lambda$ replaced by $\lambda_k$. For each $k \in \NN$ there exists $x_k \in \Gamma$ such that $\min_{\Gamma} v_k = v(x_k)$ and we set $u_k = v_k - v_k(x_k)$. Each $u_k$ satisfies
  \begin{equation}\label{eq:HJB_existence_1}
\begin{cases}
 - \mu_\alpha  \partial^2 u_k(x) + H(x,\partial u_k(x)) + \lambda_k u_k(x) = F(x) - \lambda_k v_k(x_k) \quad & \textnormal{for all } x \in \Gamma_\alpha \setminus \cV,\, \alpha \in \cA, \\
 u_{k|\Gamma_\alpha}(\varv) = u_{k|\Gamma_\beta}(\varv) \quad & \textnormal{for all } \alpha, \beta \in \cA_\varv, \, \varv \in  \cV, \\
 \lambda_k \eta_\varv u_k(\varv) + \sum_{\alpha \in \cA_\varv} \mu_\alpha \gamma_{\varv, \alpha} \partial_\alpha u_k(\varv) = \eta_\varv \left ( \theta_\varv - \lambda_k v_k(x_k) \right ) \quad & \textnormal{for all } \varv \in \cV \setminus \partial \cV, \\
 \partial_\alpha u_k(\varv) = 0 \quad & \textnormal{for all } \varv \in \partial \cV.
\end{cases}
\end{equation}
In particular, using \ref{h:HJB2} and \cref{cor:sup_bound}, we have
\begin{align*}
 \module{\partial^2 u_k} & \leq \underline{\mu}{^{-1}} \left ( \module{H(x, \partial u_k)} + \module{F} + \lambda_k \left ( \module{u_k} + \module{v_k(x_k)} \right) \right ) \\
 & \leq \underline{\mu}{^{-1}} \left (C_H \left ( 1 + \module{\partial v_k}^q \right ) + \norm{F}{L^\infty} + 2 C_1 \right ),
\end{align*}
where $\underline{\mu} = \min_{\alpha \in \cA} \mu_\alpha$. From \cref{prop:gradient_bound} we deduce that $\partial u_k$ is bounded in $\sob{1}{1}$, uniformly in $k$.

Let $x \in \Gamma$. Since $\Gamma$ is connected and because each $v_k$ is continuous, there exist $N \in \NN$, $(\varv_1, \dots \varv_{N}) \in \cV^N$ and $(\alpha_1, \dots, \alpha_{N+1}) \in \cA^{N+1}$, such that
\[
 \begin{cases}
  x_k \in \Gamma_{\alpha_1}, \\
  x \in \Gamma_{\alpha_{N+1}}, \\
  \varv_n \in \Gamma_{\alpha_n} \cap \Gamma_{\alpha_{n+1}} \textnormal{ for all } 1 \leq n \leq N,
 \end{cases}
\]
and, also using \cref{prop:gradient_bound},
\begin{align*}
 \module{v_k(x) - v_k(x_k)} & \leq \int_{\pi_{\alpha_1}^{-1}(x_k)}^{\pi_{\alpha_1}^{-1}(\varv_1)} \module{\partial v_k(y)}\, dy + \left [ \sum_{n=2}^N \int_{\pi_{\alpha_n}^{-1}(\varv_{n-1})}^{\pi_{\alpha_n}^{-1}(\varv_{n})} \module{\partial v_k(y)}\, dy \right ] \\
 &+ \int_{\pi_{\alpha_{N+1}}^{-1}(\varv_N)}^{\pi_{\alpha_{N+1}}^{-1}(x)} \module{\partial v_k(y)}\, dy  \leq \norm{\partial v_k}{L^1}   \leq C_2 \leb(\Gamma)^{(q-1)/q}.
\end{align*}
It follows that
\[
 \norm{u_k}{L^\infty} \leq C_2 \leb(\Gamma)^{(q-1)/q} \quad \textnormal{for all } k \in \NN.
\]
This proves that the sequence $(u_k)_{k \in \NN}$ is bounded in $\sob{2}{1}$. From the continuous embedding $\sob{2}{1}  \hookrightarrow\cC^1(\Gamma)$ we deduce from a bootstrap argument that the sequence in also bounded in $\cC^2(\Gamma)$.

It follows that, up to extraction of a subsequence, we have
\begin{itemize}
 \item the sequence $(\lambda_k v_k(x_k))_{k \in \NN}$ converges to some constant $\rho \in \RR$,
 \item the sequence $(u_k)_{k \in \NN}$ converges in $\cC^{1}(\Gamma)$ to some $u$, with $u \in \csob{2}{\infty}$,
 \item the sequence $(\partial^2 u_k)_{k \in \NN}$ converges weakly in $L^p(\Gamma)$ to $\partial^2 u$, for any $1 < p < \infty$.
\end{itemize}
Passing to the limit in \eqref{eq:HJB_existence_1} we obtain that
\[
 \begin{cases}
 \sum_{\alpha \in \cA_\varv} \mu_\alpha \gamma_{\varv, \alpha} \partial_\alpha u(\varv) = \eta_\varv \left (\theta_\varv - \rho \right) \quad & \textnormal{for all } \varv \in \cV \setminus \partial \cV, \\
 \partial_\alpha u(\varv) = 0 \quad & \textnormal{for all } \varv \in \partial \cV.
 \end{cases}
\]
In addition, if we fix $\alpha \in \cA$ and consider $\varphi \in \cC^{\infty}(\Gamma)$, compactly supported in the interior of $\Gamma_\alpha$, we obtain
\[
 \int_\Gamma \left (- \mu \partial^2 u + H(x,\partial u) \right ) \varphi\, dx = \int_\Gamma \left (F - \rho \right ) \varphi\, dx.
\]
It follows that
\[
 - \mu \partial^2 u + H(x,\partial u) = F - \rho \quad \textnormal{almost everywhere in } \Gamma \setminus \cV.
\]
We then deduce that $u \in\cC^{2,\varsigma}(\Gamma)$ and is a classical solution to \eqref{eq:HJB}.

\underline{Proof of Uniqueness} : Let $(u_1, \rho_1)$ and $(u_2,\rho_2)$ be two solutions to \eqref{eq:HJB}. Set $w = u_1 - u_2$ and $\bar \rho = \rho_1 - \rho_2$. Up to relabelling, we may assume that $\bar \rho \geq 0$. Then $(w,\bar \rho)$ satisfies
\begin{equation}\label{eq:HJB_uniqueness_1}
 \begin{cases}
  - \mu_\alpha \partial^2 w_\alpha + b_\alpha \partial w_\alpha = - \bar \rho \leq 0 \quad & \textnormal{in } (0,L_\alpha) \textnormal{ for all } \alpha \in \cA, \\
  \eta_\varv \bar \rho + \sum_{\alpha \in \cA_\varv} \mu_\alpha \gamma_{\varv, \alpha} \partial_\alpha w(\varv) = 0 \quad & \textnormal{for all } \varv \in \cV \setminus \partial \cV,\\
  \partial_\alpha w(\varv) = 0 \quad &  \textnormal{for all } \varv \in \partial \cV, 
  \end{cases}
 \end{equation}
 where
 \[
  b_\alpha(x) = \int_0^1 \partial_p H_\alpha(x, t u_\alpha(x) + (1 - t) v_\alpha(x)) \, dt \quad \textnormal{for all } x \in (0, L_\alpha).
 \]
 We first claim that $\bar \rho = 0$. Indeed, since $w$ is continuous and $\Gamma$ compact, we may choose $x_0 \in \argmax_{\Gamma} w$. Assume first that $x_0$ belongs to the interior of $\Gamma_\alpha$ for some $\alpha \in \cA$.  Using the optimality conditions at $x_0$ in \eqref{eq:HJB_uniqueness_1} we obtain $\bar \rho \leq 0$. Assume now that $x_0 = \varv \in \cV$. From Hopf's lemma we know that we must have $\varv \in \cV \setminus \partial \cV$. In this case we have $\partial_\alpha w(\varv) \geq 0$ for all $\alpha \in \cA_\varv$, so that the junction condition in \eqref{eq:HJB_uniqueness_1} also yields $\bar \rho \leq 0$. This proves the claim.

 Now, from the strong maximum principle, we know that $w$ cannot attain a maximum point outside $\cV$ unless it is constant. On the other hand, as a consequence of Hopf's lemma, if $\varv \in \cV$ is a strict maximum of $w$ we must have $\partial_\alpha w(\varv) > 0$ for all $\alpha \in \cA$, which contradicts the junction condition in \eqref{eq:HJB_uniqueness_1}. This concludes the proof.
\end{proof}
 
\subsection{Verification Theorem} 
Following \cite[Section III.8]{FS2006}, we introduce an optimal control problem for a sticky diffusion process on the network and  we prove a verification theorem for the corresponding value function in term of the solution to \eqref{eq:HJB_disc}.  Our goal is to provide a justification for the junction condition considered in \eqref{eq:HJB_disc} and \eqref{eq:HJB}. In order to avoid technicalities, we stick to bounded controls.  We consider
$\mu_\alpha$, $\gamma_{\varv,\alpha}$, $\eta_\varv$ as in \ref{h:HJB1}, the set of controls
\[\gothA=\{a:\Gamma\to \RR:\, a\in\mathcal{PC}(\Gamma),\, \norm{a}{\mathcal{PC}(\Gamma)}\le R\}, \]
for some positive constant $R$, and a function $b \colon \Gamma \times \RR \to \RR$ such that, for every $\alpha \in \cA$, the function $b_{|\Gamma_\alpha \times \RR}$ in continuous for the topology of $\Gamma_\alpha \times \RR$ induced by $\Gamma \times \RR$. In this case, and for every $a \in \gothA$, we have that $b(\cdot, a(\cdot))$ also belongs to $PC(\Gamma)$ and we may therefore consider the generator of a sticky Markov process
\begin{equation}\label{eq:s1_eq1_ctr}
    \mathcal{G}^a_\alpha f(x)=\mu_\alpha \partial^2 f(x)+b_\alpha(x,a_\alpha(x))\partial f(x)\qquad \textnormal{for all } x \in \Gamma_\alpha \setminus \cV,\, \alpha \in \cA, 
\end{equation}
with domain
\begin{equation}\label{eq:s1_eq1a_ctr}
D(\cG^a) = \left \{f \in \cC^2(\Gamma) : \, \begin{array}{l} \cG^a f\in \cC(\Gamma),\,\eta_\varv \cG^a f(\varv) + \sum_{\alpha \in \cA_\varv} \mu_\alpha \gamma_{\varv,\alpha} \partial_\alpha f(\varv) = 0\\[4pt]
\textnormal{ for all } \varv \in \cV \setminus \partial \cV,\,
\partial_\alpha f(\varv) = 0\,   \textnormal{ for all } \varv \in \partial \cV \end{array} \right \}.
\end{equation}
For every $x \in \Gamma$,  let $X^{a,x} = X^a$ be the   strong Markov process corresponding to $\cG^a$ with initial distribution $\delta_x$.  Given the cost function $\ell \colon \Gamma \times \RR \to \RR$,  
continuous in   $\Gamma_\alpha \times \RR$ for $\alpha\in\cA$, $F\in PC (\Gamma )$, and $\theta_\varv\in\RR$ for $\varv\in\cV\setminus\partial\cV$, define the cost functional $J \colon \Gamma \times \gothA \to \RR$  
\begin{align*}
J(x,a) = \EE_x \Bigg [ \int_0^{+ \infty} & e^{-\lambda s}\Big [\big(\ell(X^a(s), a(X^a(s))) + F(X^a(s)) \big) \mathbbm{1}_{\{X(s) \notin \cV \}}\\
& + \sum_{\varv \in \cV \setminus \partial \cV} \theta_\varv \mathbbm{1}_{\{X^a(s) = \varv \}} \Big ]\, ds \Bigg ],
\end{align*}
where $\lambda > 0$.
This defines an infinite horizon optimal control problem where the running cost is given by $\ell + F$ inside the edges and $\theta_\varv$ when the agent is at $\varv$. Notice the vertex part of the cost is related to the stickiness parameter $\eta_\varv$ through
\[
 \EE_x \left [ \theta_\varv  \int_0^{+ \infty} \mathbbm{1}_{\{X^a(s) = \varv \}}e^{-\lambda s} \, ds \right ] =  \theta_\varv \eta_\varv \EE_x \left [ \int_0^{+\infty} e^{- \lambda s} d L^X(s,\varv) \right ],
\]
where we used \cref{thm:ito_formula}. In particular, if $\eta_\varv = 0$, the contribution of $\theta_\varv$ to $J(x,a)$ is null.
For each $x \in \Gamma$, consider the value function
\begin{equation}\label{eq:Hamiltonian}
 V(x) = \inf_{a \in \gothA} J(x,a).
\end{equation}
Our goal is to identify $V$ as the unique solution to the following discounted Hamilton-Jacobi-Bellman equation
\begin{equation}\label{eq:HJB_disc_verif}
	\begin{cases}
		- \mu_\alpha  \partial^2 u(x) + H(x,\partial u(x)) +\lambda u(x) = F(x) \quad & \textnormal{for all } x \in \Gamma_\alpha \setminus \cV,\, \alpha \in \cA, \\
		u_{|\Gamma_\alpha}(\varv) = u_{|\Gamma_\beta}(\varv) \quad & \textnormal{for all } \alpha, \beta \in \cA_\varv, \, \varv \in  \cV, \\
		  \sum_{\alpha \in \cA_\varv}   \mu_\alpha \gamma_{\varv, \alpha} \partial_\alpha u(\varv) =\eta_\varv (  \theta_\varv -\lambda u(\varv) ) \quad & \textnormal{for all } \varv \in \cV \setminus \partial \cV, \\
		\partial_\alpha u(\varv) = 0 \quad & \textnormal{for all } \varv \in \partial \cV,
	\end{cases}
\end{equation}
where
\[
 H_\alpha(x,p) := \sup_{|a|\le R} \left \{- b_\alpha(x,a)p - \ell_\alpha(x,a) \right \},\qquad \alpha\in \cA.
\]

\begin{thm}\label{thm:verification_1}
 Let $u \in \cC^2(\Gamma)$ be a solution to \eqref{eq:HJB_disc_verif}. Then, for every $a \in \gothA$, we have $u(x) \leq J(x,a)$. In particular we have $u(x) \leq V(x)$.
\end{thm}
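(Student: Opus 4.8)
The strategy is a standard verification argument based on the It\^o formula from \cref{thm:ito_formula}, applied to the function $f(s,x) = e^{-\lambda s} u(x)$ and the controlled sticky process $X^a = X^{a,x}$. First I would fix $a \in \gothA$, observe that $u \in \cC^2(\Gamma)$ has time-independent derivatives so the regularity hypotheses of \cref{thm:ito_formula} are trivially met with $\partial_s f(s,x) = -\lambda e^{-\lambda s} u(x)$, and write, for $0 < t \leq T$,
\begin{align*}
 e^{-\lambda t} u(X^a(t)) &= u(x) + \int_0^t e^{-\lambda s}\left( -\lambda u(X^a(s)) + \cG^a_\alpha u(X^a(s)) \right)\mathbbm{1}_{\{X^a(s) \in \Gamma_\alpha \setminus \cV\}}\, ds \\
 &\quad + 2\int_0^t e^{-\lambda s}\sqrt{\mu_\alpha}\,\partial_x u(X^a(s))\mathbbm{1}_{\{X^a(s) \in \Gamma_\alpha\setminus\cV\}}\, dW_s \\
 &\quad + \sum_{\varv \in \cV} \int_0^t e^{-\lambda s}\left( -\eta_\varv \lambda u(\varv) - \sum_{\alpha \in \cA_\varv}\mu_\alpha\gamma_{\varv,\alpha}\partial_\alpha u(\varv) \right) dL^X(s,\varv),
\end{align*}
where on each term the index $\alpha$ is the (random) edge containing $X^a(s)$.

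Next I would estimate the three contributions separately. For the stochastic integral: since $u \in \cC^2(\Gamma)$, $\partial_x u$ is bounded on $\Gamma$, so the integrand is bounded and the It\^o integral has zero expectation. For the interior (Lebesgue) term, I use the pointwise inequality coming from the PDE: by the definition of $H_\alpha$ as a sup over $|a| \leq R$, for every $x \in \Gamma_\alpha\setminus\cV$ and every admissible control value $a_\alpha(x)$ one has $-b_\alpha(x,a_\alpha(x))\partial u_\alpha(x) - \ell_\alpha(x,a_\alpha(x)) \leq H_\alpha(x,\partial u_\alpha(x))$, hence $\cG^a_\alpha u(x) - \lambda u(x) = \mu_\alpha \partial^2 u_\alpha(x) + b_\alpha(x,a_\alpha(x))\partial u_\alpha(x) - \lambda u_\alpha(x) \geq \mu_\alpha\partial^2 u_\alpha(x) - H_\alpha(x,\partial u_\alpha(x)) - \lambda u_\alpha(x) - \ell_\alpha(x,a_\alpha(x)) = -F_\alpha(x) - \ell_\alpha(x,a_\alpha(x))$, using the first line of \eqref{eq:HJB_disc_verif}. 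For the local-time term at each internal vertex $\varv \in \cV\setminus\partial\cV$, the generalized Kirchhoff condition gives exactly $-\eta_\varv\lambda u(\varv) - \sum_{\alpha\in\cA_\varv}\mu_\alpha\gamma_{\varv,\alpha}\partial_\alpha u(\varv) = -\eta_\varv\theta_\varv$; at boundary vertices $\partial_\alpha u(\varv) = 0$ and $\eta_\varv = 0$ by convention, so those terms vanish. Then I invoke part (ii) of \cref{thm:ito_formula} to rewrite $\int_0^t e^{-\lambda s}\eta_\varv\theta_\varv\, dL^X(s,\varv) = \int_0^t e^{-\lambda s}\theta_\varv\mathbbm{1}_{\{X^a(s)=\varv\}}\, ds$. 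Taking expectations and combining, one finds
\[
 u(x) \leq \EE_x\!\left[ e^{-\lambda t}u(X^a(t)) + \int_0^t e^{-\lambda s}\Big( (\ell + F)(X^a(s),a)\mathbbm{1}_{\{X^a(s)\notin\cV\}} + \sum_{\varv}\theta_\varv\mathbbm{1}_{\{X^a(s)=\varv\}}\Big)ds\right].
\]

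Finally I would let $t \to +\infty$. Since $u$ is continuous on the compact network $\Gamma$ it is bounded, so $\EE_x[e^{-\lambda t}u(X^a(t))] \to 0$ by dominated convergence (using $\lambda > 0$); and the cost integrand, while not necessarily nonnegative, is bounded (as $\ell$ is continuous on the compact sets $\Gamma_\alpha\times[-R,R]$, $F \in PC(\Gamma)$, and the $\theta_\varv$ are finitely many reals), so monotone/dominated convergence applies to pass to the limit in the integral, giving $u(x) \leq J(x,a)$. Taking the infimum over $a \in \gothA$ yields $u(x) \leq V(x)$. The one genuinely delicate point is the application of the It\^o formula of \cref{thm:ito_formula} to the controlled process $X^a$ generated by $\cG^a$ rather than the fixed $\cG$ of the statement: I would note that $b(\cdot,a(\cdot)) \in PC(\Gamma)$ for $a \in \gothA$, so $\cG^a$ is of exactly the form \eqref{eq:s1_eq1}–\eqref{eq:s1_eq1a} and \cref{thm:ito_formula} applies verbatim; the Lipschitz/regularity conditions needed for the path-wise It\^o formula from \cite{BC} must be checked for $b_\alpha(\cdot,a_\alpha(\cdot))$, which I expect to follow from the continuity assumptions on $b$ together with the restriction $\|a\|_{\mathcal{PC}(\Gamma)} \leq R$, and this is the step I would be most careful about.
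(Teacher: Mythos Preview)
Your proposal is correct and follows essentially the same route as the paper: apply the It\^o formula of \cref{thm:ito_formula} to $f(s,x)=e^{-\lambda s}u(x)$ along $X^a$, bound the interior drift using $-b\,\partial u - \ell \leq H(x,\partial u)$ together with the first line of \eqref{eq:HJB_disc_verif}, evaluate the local-time term via the generalized Kirchhoff condition, convert it with part~(ii) of \cref{thm:ito_formula}, take expectations (killing the martingale term), and pass to $t\to\infty$ by dominated convergence. The paper does exactly this, so there is nothing to add.
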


\begin{proof}
 Let $u \in \cC^2(\Gamma)$ be a solution to \eqref{eq:HJB_disc_verif} and let $a \in \gothA$. Consider $X^a$ the Feller process generated by \eqref{eq:s1_eq1_ctr}, \eqref{eq:s1_eq1a_ctr}. We define a function $f \colon \RR_+ \times \Gamma \to \RR$ by setting $f(t,x) = u(x)e^{-\lambda t}$.
 Let also $(t_n)_{n\in \NN}$ be a sequence in $\RR_+$ converging to $+ \infty$. From \cref{thm:ito_formula} we deduce that, for every $n \in \NN$,
 \begin{equation}\label{eq:verification_1}
 \begin{split}
  \EE_x&[f(t_n, X^a(t_n))]   = f(0,x) + \EE_x \left [ \int_0^{t_n} \left ( \partial_s + \cG^a \right) f(s,X^a(s)) \mathbbm{1}_{\{X(s) \notin \cV\}}\, ds \right ] \\
  &  + \sum_{\varv \in \cV \setminus \partial \cV} \EE_x \Big [ \int_0^{t_n} \Big ( \eta_\varv \partial_s f(s,\varv) -  \sum_{\alpha \in \cA_\varv} \mu_\alpha \gamma_{\varv,\alpha} \partial_\alpha f(s,\varv) \Big ) d L^{X^a}(s,\varv) \Big].
  \end{split}
 \end{equation}
 Notice that, for $s > 0$ and $X^a(s) \notin \cV$,
 \begin{align*}
   ( \partial_s& + \cG^a ) f(s,X^a(s))  = \left[- \lambda u(X^a(s)) + \mu \partial^2 u(X^a(s)) + b(X^a(s), a(X^a(s))) \partial u(X^a(s)) \right] e^{-\lambda s} \\
   & \geq \left[- \lambda u(X^a(s)) + \mu \partial^2 u(X^a(s)) - H(X^a(s), \partial u(X^a(s))) - \ell(X^a(s), a(X^a(s))) \right] e^{-\lambda s} \\
   & = - \left [F(X^a(s)) + \ell(X^a(s), a(X^a(s))) \right] e^{-\lambda s}.
 \end{align*}
 From the junction condition in \eqref{eq:HJB_disc_verif} we also have
 \begin{align*}
  \eta_\varv \partial_s f(s,\varv) - \sum_{\alpha \in \cA_\varv} \mu_\alpha \gamma_{\varv,\alpha} \partial_\alpha f(s,\varv) & = \Big (-\eta_\varv \lambda u(\varv) - \sum_{\alpha \in \cA_\varv} \mu_\alpha \gamma_{\varv,\alpha} \partial_\alpha u(\varv) \Big ) e^{-\lambda s}  = - \eta_\varv \theta_\varv e^{- \lambda s}.
 \end{align*}
 So that \eqref{eq:verification_1} and \cref{thm:ito_formula} imply
 \begin{align*}
  \EE_x \left [ u(X^a(t_n)) e^{-\lambda t_n} \right] & \geq u(x) - \EE_x \left [ \int_0^{t_n} \left (F(X^a(s)) + \ell(X^a(s), a(X^a(s))) \right) \bm{1}_{\{X^a(s) \notin \cV\}} e^{-\lambda s}\, ds \right ] \\
  &  - \sum_{\varv \in \cV \setminus \partial \cV} \EE_x \left [ \int_0^{t_n} \eta_\varv  \theta_\varv e^{- \lambda s} d L^{X^a}(s,\varv) \right] \\
  & = u(x) - \EE_x \left [ \int_0^{t_n} \left (F(X^a(s)) + \ell(X^a(s), a(X^a(s))) \right) \bm{1}_{\{X(s) \notin \cV\}} e^{-\lambda s}\, ds \right ] \\
  &  - \sum_{\varv \in \cV \setminus \partial \cV} \EE_x \left [ \int_0^{t_n}  \theta_\varv \mathbbm{1}_{\{ X^a(s) = \varv \}} e^{- \lambda s} ds \right].
 \end{align*}
Since $u$, $F$ and $\ell(\cdot,a(\cdot))$ are bounded, we may apply Lebesgue's convergence theorem to pass to the limit $n \to \infty$ and obtain $u(x) \leq J(x,a)$.
\end{proof}

To get the reverse inequality, we consider the following assumption
\begin{enumerate}[label={\bf (H\arabic*)},resume ]
	\item \label{h:HJB4} 
For each $(x,p)	\in \Gamma_\alpha\times \RR$, there exists a unique  $a^\star_\alpha=a^\star_\alpha(x,p)$,   with $ \module{\alpha^\star_\alpha} \leq R$ for every $\alpha \in \cA$, such that
\[
H_\alpha(x,p) = -b_\alpha(x,a^\star_\alpha)p - \ell_\alpha(x,a^\star_\alpha)
\]	
and the map  $a^\star_\alpha \colon \Gamma_\alpha \times \RR \to \RR$ is continuous.
\end{enumerate}
\begin{prop}
Assume \ref{h:HJB4} and let $u \in \cC^2(\Gamma)$ be a solution to \eqref{eq:HJB_disc_verif}. Then $u(x) \geq V(x)$ for all $x \in \Gamma$.
\end{prop}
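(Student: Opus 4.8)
The plan is to establish the reverse inequality by exhibiting an admissible control $\hat a \in \gothA$ achieving $J(x,\hat a) = u(x)$; indeed this immediately gives $V(x) = \inf_{a\in\gothA}J(x,a)\leq J(x,\hat a) = u(x)$, and in fact shows that $\hat a$ is optimal and that $u = V$. The control $\hat a$ will be the feedback law furnished by \ref{h:HJB4}: on each edge set $\hat a_\alpha(y) = a^\star_\alpha\bigl(\pi_\alpha(y),\partial u_\alpha(y)\bigr)$ for $y\in[0,L_\alpha]$. Since $u\in\cC^2(\Gamma)$, the map $y\mapsto\partial u_\alpha(y)$ is continuous on $[0,L_\alpha]$, and $a^\star_\alpha$ is continuous by \ref{h:HJB4}, so $\hat a_\alpha\in\cC([0,L_\alpha])$; together with $|a^\star_\alpha|\leq R$ this yields $\hat a\in\gothA$. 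By the discussion following \eqref{eq:s1_eq1a_ctr} we then have $b(\cdot,\hat a(\cdot))\in PC(\Gamma)$, so there is an associated sticky Feller process $X^{\hat a}$, with initial law $\delta_x$, to which \cref{thm:ito_formula} applies.

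Next I would repeat the computation in the proof of \cref{thm:verification_1} with $f(t,x) = u(x)e^{-\lambda t}$ and the control $\hat a$, on the intervals $[0,t_n]$ with $t_n\uparrow+\infty$. The only change is that, by the defining identity of $a^\star$ in \ref{h:HJB4},
\[
 b_\alpha\bigl(X^{\hat a}(s),\hat a(X^{\hat a}(s))\bigr)\,\partial u(X^{\hat a}(s)) = -H\bigl(X^{\hat a}(s),\partial u(X^{\hat a}(s))\bigr) - \ell\bigl(X^{\hat a}(s),\hat a(X^{\hat a}(s))\bigr)
\]
whenever $X^{\hat a}(s)\notin\cV$, so that, using the first line of \eqref{eq:HJB_disc_verif}, the inequality in \cref{thm:verification_1} becomes the equality $(\partial_s+\cG^{\hat a})f(s,X^{\hat a}(s)) = -\bigl(F+\ell(\cdot,\hat a)\bigr)(X^{\hat a}(s))e^{-\lambda s}$. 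The vertex terms are handled exactly as in \cref{thm:verification_1}: the junction condition makes them equal to $-\eta_\varv\theta_\varv e^{-\lambda s}$, and the second assertion of \cref{thm:ito_formula} rewrites the corresponding local-time integrals in terms of $\mathbbm{1}_{\{X^{\hat a}(s)=\varv\}}$. Taking $\EE_x$ thus produces an identity rather than an inequality, and letting $n\to\infty$ — licit because $u$, $F$ and $\ell(\cdot,\hat a(\cdot))$ are bounded ($\ell_\alpha$ being continuous on the compact set $\Gamma_\alpha\times[-R,R]$) and $\lambda>0$, so that $\EE_x[u(X^{\hat a}(t_n))e^{-\lambda t_n}]\to0$ by dominated convergence — gives $u(x) = J(x,\hat a)$, hence $u(x)\geq V(x)$.

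The substantive points, as opposed to the routine computation, are the admissibility of $\hat a$ (this is where $u\in\cC^2(\Gamma)$ and the continuity of $a^\star$ in \ref{h:HJB4} are essential, so that the feedback law indeed lies in $\gothA$) and the fact that the closed-loop process $X^{\hat a}$ is a bona fide sticky process for which both parts of \cref{thm:ito_formula} are available. Once these are in hand, the argument is literally that of \cref{thm:verification_1} with every inequality turned into an equality.
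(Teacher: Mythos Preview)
Your proposal is correct and follows essentially the same route as the paper: define the feedback control $\hat a(\cdot)=a^\star(\cdot,\partial u(\cdot))$ via \ref{h:HJB4}, observe that the inequality in the proof of \cref{thm:verification_1} becomes an equality for this choice, and pass to the limit to obtain $u(x)=J(x,\hat a)\geq V(x)$. You are in fact more careful than the paper in explicitly checking that $\hat a\in\gothA$ and that the closed-loop drift lies in $PC(\Gamma)$.
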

\begin{proof}
Using \ref{h:HJB4}, we have
\[
H(x,\partial u(x)) = \partial_p H(x, \partial u(x)) \partial u(x) - \ell(x,a^\star(x)).
\]
We may consider the generator $\cG^\star$ defined as in \eqref{eq:s1_eq1_ctr} with $b$ replaced by $\partial_p H(\cdot, \partial u(\cdot))$ and the corresponding Markov process $X^\star$. 
 Then the same computation as in \cref{thm:verification_1} yields
 \begin{align*}
  u(x) & = \EE_x \Bigg [ \int_0^{+\infty} \left (F(X^\star(s)) + \ell(X^\star(s), a^\star(X^\star(s))) \right) \bm{1}_{\{X^a(s) \notin \cV\}} e^{-\lambda s}\, ds  \Bigg ] \\
  & \qquad +  \sum_{\varv \in \cV \setminus \partial \cV} \EE_x \Bigg [ \int_0^{+ \infty}  \theta_\varv \mathbbm{1}_{\{ X^{a^\star}(s) = \varv \}} e^{- \lambda s} ds \Bigg] \geq V(x)
 \end{align*}
\end{proof}

\section{The MFG system}\label{sec:5}
Aim of this section is to study the Mean Field Games model with sticky transition condition at the vertices. We consider the following MFG system 
\begin{equation}\label{eq:MFG}
\left\{\begin{aligned}
	(i)\,
	&- \mu_\alpha  \partial^2 u(x) + H(x,\partial u(x)) + \rho = F[\mf]  & \textnormal{for all } x \in \Gamma_\alpha \setminus \cV ,\, \alpha \in \cA, \\
	&u_{|\Gamma_\alpha}(\varv) = u_{|\Gamma_\beta}(\varv)   & \textnormal{for all } \alpha, \beta \in \cA_\varv, \, \varv \in  \cV, \\
	&\sum_{\alpha \in \cA_\varv} \mu_\alpha \gamma_{\varv, \alpha} \partial_\alpha u(\varv) = \eta_\varv \left ( \theta_\varv + F[\mf](\varv) - \rho \right)  & \textnormal{for all } \varv \in \cV \setminus \partial \cV, \\
	&\partial_\alpha u(\varv) = 0   & \textnormal{for all } \varv \in \partial \cV, \\
	&\int_\Gamma u \, dx = 0 \\	
	(ii)\, &\,\mf = m \leb + \sum_{\varv \in \cV \setminus \partial \cV} \eta_\varv T_\varv[m] \delta_{\varv},\\
	- &\mu_\alpha \partial^2 m(x) -  \partial \left(\partial_p H(x, \partial u(x)) m(x) \right ) = 0   & \textnormal{for all } x \in \Gamma_\alpha,\, \alpha \in \cA, \\
	&\frac{m_{|\Gamma_\alpha}(\varv)}{\gamma_{\varv,\alpha}} = \frac{m_{|\Gamma_\beta}(\varv)}{\gamma_{\varv,\beta}} =: T_\varv[m]   & \textnormal{for all } \alpha, \beta \in \cA_\varv, \, \varv \in \cV \setminus \partial \cV, \\
	&\sum_{\alpha \in \cA_\varv} \mu_\alpha \partial_\alpha m_{|\Gamma_\alpha}(\varv) + n_{\varv,\alpha}  m_{|\Gamma_\alpha}(\varv)  \partial_p H_\alpha(\varv, \partial u_{|\Gamma_\alpha}(\varv)) = 0   & \forall \varv \in \cV, \\
	&m \geq 0,  \, 1 \geq \int_\Gamma m\, dx = 1 -  \sum_{\varv \in \cV \setminus{\partial \cV}} \eta_\varv T_\varv[m] \geq 0.
\end{aligned}\right.
\end{equation}
In \eqref{eq:MFG}, the item \textit{(i)} corresponds to the Hamilton-Jacobi-Bellman equation,
\textit{(ii)} to the Fokker-Planck equation. Recall that this system arises from a Mean Field Game with a long-term average cost \cite{LL2007}, where the dynamics of a typical player are modeled by a sticky diffusion on $ \Gamma$. In this framework, the dynamics behaves as a standard diffusion within each edge with the agent controlling the velocity. Upon reaching a vertex $\varv$, the agent enters the edge $\alpha \in \cA_\varv$ with probability $\mu_\alpha \gamma_{\varv, \alpha}$ and is subject to stickiness, the latter being quantified by $\eta_\varv$. The mapping $F$, called the coupling, encodes the interactions between players through their distribution and $\theta_\varv$ is a fixed cost that the player must pay when at vertex $\varv$. We refer the reader to \cite{CM2016,ADLT2019} for a formal derivation of the system in the nonsticky setting.

Note that the coupling and the ergodic constant also appear in the right hand side of the Kirchhoff condition, in addition to the equation. We observe that, given the  solution $m$ of the Fokker-Planck equation inside the edges with the corresponding transition conditions at the vertices, the measure $\mf$ is completely determined by the coefficients $T_\varv[m]$. Hence we consider as unknows in \eqref{eq:MFG} $u$, $\rho$ and $m$.
\begin{thm}[Existence of solutions]
  Assume \ref{h:HJB1}, \ref{h:HJB2} and that $F \colon\cP_1(\Gamma) \to PC^\varsigma(\Gamma)$ is continuous and takes values in a bounded subset of $PC^{\varsigma}(\Gamma)$. Then there exists a solution $(u,\rho, m ) \in \cC^{2,\varsigma}(\Gamma)\times \RR  \times \bssob$ to the MFG system \eqref{eq:MFG}.
\end{thm}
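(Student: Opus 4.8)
The plan is to recast \eqref{eq:MFG} as a fixed point problem on $\cP_1(\Gamma)$ and invoke Schauder's theorem. Fix $\mf \in \cP_1(\Gamma)$. Since $F[\mf] \in PC^\varsigma(\Gamma)$ by hypothesis, assumptions \ref{h:HJB1}-\ref{h:HJB2} together with the substitution $F \rightsquigarrow F[\mf]$ and $\theta_\varv \rightsquigarrow \theta_\varv + F[\mf](\varv)$ (a fixed real number at each $\varv \in \cV \setminus \partial \cV$) put us in the framework of \cref{thm:HJB_well_posed}: there exists a unique pair $(u,\rho) = (u[\mf],\rho[\mf]) \in \cC^{2,\varsigma}(\Gamma) \times \RR$ solving the Hamilton--Jacobi--Bellman equation \textit{(i)}, the normalization $\int_\Gamma u \, dx = 0$ removing the translation ambiguity. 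Given $u = u[\mf]$, set $b_\alpha(x) := \partial_p H_\alpha(x, \partial u_\alpha(x))$; by \ref{h:HJB2} and the continuity of $\partial u$ on each edge we have $b \in PC(\Gamma)$, so \cref{prop:FP_well_posed} produces a unique weak solution $m = m[\mf] \in \bssob$ of the Fokker--Planck system in \textit{(ii)}, which by construction satisfies $m \geq 0$ and $\int_\Gamma m \, dx + \sum_{\varv \in \cV \setminus \partial \cV} \eta_\varv T_\varv[m] = 1$. Defining $\Psi(\mf) := m \, \leb + \sum_{\varv \in \cV \setminus \partial \cV} \eta_\varv T_\varv[m] \, \delta_\varv$ thus yields a map $\Psi \colon \cP_1(\Gamma) \to \cP_1(\Gamma)$ whose fixed points are precisely the solutions of \eqref{eq:MFG}.

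The core of the argument is the continuity of $\Psi$, which I would establish by composing the continuity of the successive links
\[
 \mf \ \longmapsto \ F[\mf] \ \longmapsto \ (u,\rho) \ \longmapsto \ b \ \longmapsto \ m \ \longmapsto \ \Psi(\mf).
\]
The first arrow is continuous by assumption and has range in a bounded subset of $PC^\varsigma(\Gamma)$; by the uniform a priori bound of \cref{thm:HJB_well_posed}, the family $\{u[\mf] : \mf \in \cP_1(\Gamma)\}$ is then bounded in $\cC^{2,\varsigma}(\Gamma)$ and $\{\rho[\mf]\}$ is bounded in $\RR$. Continuity of the second arrow follows from a compactness--uniqueness argument: if $\mf_n \to \mf$ then $F[\mf_n] \to F[\mf]$ uniformly, the $u_n$ are precompact in $\cC^2(\Gamma)$ and the $\rho_n$ in $\RR$, every limit point $(\tilde u, \tilde\rho)$ solves \textit{(i)} with coupling $F[\mf]$ and satisfies $\int_\Gamma \tilde u \, dx = 0$, so uniqueness in \cref{thm:HJB_well_posed} forces $(\tilde u, \tilde\rho) = (u[\mf], \rho[\mf])$ and hence the whole sequence converges. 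The third arrow is continuous from $\cC^2(\Gamma)$ to $PC(\Gamma)$, since uniform convergence of $\partial u_n$ and the uniform continuity of $\partial_p H_\alpha$ on compact subsets of $\Gamma_\alpha \times \RR$ yield uniform convergence of $b^n_\alpha$ to $b_\alpha$; the fourth arrow is exactly the stability statement of \cref{prop:FP_stability} (uniform convergence of $m^n_\alpha$ together with weak convergence in $\bssob$); and the fifth arrow is \cref{prop:continuity_density}. Composing, $\Psi$ is continuous.

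Finally, since $\Gamma$ is a compact metric space, $\cP_1(\Gamma)$ equipped with $\dw$ is compact, and it is a convex subset of the space of finite signed Borel measures endowed with its weak topology, which is locally convex and Hausdorff; the Schauder fixed point theorem in the form valid for compact convex subsets of locally convex spaces therefore yields $\mf^\star \in \cP_1(\Gamma)$ with $\Psi(\mf^\star) = \mf^\star$, and the triple $(u[\mf^\star], \rho[\mf^\star], m[\mf^\star]) \in \cC^{2,\varsigma}(\Gamma) \times \RR \times \bssob$ is the desired solution of \eqref{eq:MFG}. I expect the only genuinely delicate point to be the continuity of the solution operator of the ergodic HJB equation --- the additive constant of $u$ and the ergodic constant $\rho$ being determined only implicitly --- which is precisely why the uniform estimates and the uniqueness part of \cref{thm:HJB_well_posed}, together with the zero-mean normalization, are invoked; the remaining links are routine bookkeeping on the composition.
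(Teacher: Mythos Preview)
Your proof is correct and follows essentially the same Schauder fixed-point strategy as the paper: solve the HJB equation via \cref{thm:HJB_well_posed}, feed the resulting drift into the Fokker--Planck equation via \cref{prop:FP_well_posed}, and establish continuity of the composite map by the compactness--uniqueness argument combined with \cref{prop:FP_stability} and \cref{prop:continuity_density}. The only notable variation is the ambient space for the fixed point: the paper works with the density $m$ in the closed convex set $\cM \subset PC(\Gamma)$ and invokes compactness of the map (via the embedding $\bssob \hookrightarrow PC(\Gamma)$), whereas you work with the measure $\mf$ in $\cP_1(\Gamma)$ and exploit compactness of the domain itself; both are legitimate and lead to the same conclusion.
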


\begin{proof}
 The proof  follows the  by now standard argument for existence in second order MFG and relies on Schauder's fixed point theorem.
 We define
\begin{equation*}
	\cM := \left \{ m \in PC(\Gamma) : \begin{array}{l} m \geq 0,\, \frac{m_{|\Gamma_\alpha}(\varv)}{\gamma_{\varv,\alpha}} = \frac{m_{|\Gamma_\beta}(\varv)}{\gamma_{\varv,\beta}} =: T_\varv[m] \quad\textnormal{for all } \alpha, \beta \in \cA_\varv,\\[4pt]
	\varv \in \cV \setminus \partial \cV\quad \textnormal{and } \int_\Gamma m \, dx  + \sum_{\varv \in \cV \setminus{\partial \cV}} \eta_\varv T_\varv[m] = 1 \end{array} \right \}.
\end{equation*}
Notice that the  previous set  is a closed convex subset of $PC(\Gamma)$.
 Let $m \in \cM$ and define $\mf \in \cP_1(\Gamma)$ according to \eqref{eq:mu}. Since $F[\mf] \in PC^{\varsigma}(\Gamma)$ we know from \cref{thm:HJB_well_posed} that there exists a unique solution $(u,\rho) \in \cC^{2,\varsigma}(\Gamma) \times \RR$ to \eqref{eq:HJB} with $F$ replaced by $ F[\mf]$ and $\theta_\varv$ replaced by $ \theta_\varv + F[\mf](\varv)$. From \cref{prop:FP_well_posed} we then know that there exists a unique weak solution $\bar m \in \bssob \subset PC(\Gamma)$ to \eqref{eq:FP}. Since $\bar m \in \cM$, we can define a mapping $\Phi \colon \cM \to \cM$ by setting $\Phi(m) = \bar m$. We claim that $\Phi$ satisfies the assumptions of Schauder's fixed point theorem \cite[Corollary 11.2]{GT2001}.

 We first prove that $\Phi$ is continuous. Let $(m^n)_{n \in \NN}$ be a sequence in $\cM$ and converging to $m \in \cM$. Define $\mf^n$ and $\mf$ according to \eqref{eq:mu}. From \cref{prop:continuity_density} we know that $\mf^n$ converges to $\mf$ in $\mathcal{P}_1(\Gamma)$. Since $F $ is   continuous, we deduce that $F[\mf^n]$ converges to $F[\mf]$ in $PC^{\varsigma}(\Gamma)$. For each $n \in \NN$, from \cref{thm:HJB_well_posed}, there exists a unique solution $(u^n,\rho^n) \in \cC^{2,\varsigma}(\Gamma) \times \RR$ to \eqref{eq:HJB} with $\int u\, dx = 0$,  $F$ replaced by $F[\mf^n]$ and $\theta_\varv$ by $\theta_\varv + F[\mf^n](\varv)$.\par
 We claim that the sequence $(\rho^n)_{n \in \NN}$ is bounded. Indeed, integrating the equation satisfied by $(u^n, \rho^n)$, we have
 \begin{align*}
  \module{\rho^n} \leb(\Gamma) = \int_{\Gamma} \sgn(\rho^n) \rho^n\, dx  \leq \int_\Gamma \mu \module{\partial^2 u^n} + C_H(1 + \module{\partial u^n}^q) + \module{F[\mf^n]}\, dx.
 \end{align*}
 From \cref{thm:HJB_well_posed} we know that $u^n$ is bounded in $\sob{2}{1}$ and that $\partial u^n$ is bounded in $\Lx{q}$, so that the right-hand side in the last inequality is bounded. This proves the claim.\par
 By bootstrapping the regularity of $u^n$ we see that $(u^n)_{n \in \NN}$ is bounded in $\cC^{2,\varsigma}(\Gamma)$. We may extract a subsequence converging to a solution $(u,\rho) \in \cC^2(\Gamma) \times \RR$ to \eqref{eq:HJB} with $\int u \, dx = 0$, $F$ replaced by $F[\mf]$ and $\theta_\varv$ by $\theta_\varv + F[\mf](\varv)$. Since this solution is unique we conclude that the whole sequence must converge to $(u,\rho)$. Then $\bar m^n := \Phi(m^n)$ is the unique solution to \eqref{eq:FP} with $b$ replaced by $\partial_p H(\cdot, \partial u^n)$. Since $\partial u_{|\Gamma_\alpha}^n$ converges uniformly to $\partial u_{\Gamma_\alpha}$ and $\partial_p H_\alpha$ is continuous for every $\alpha \in \cA$, we may apply \cref{prop:FP_stability} to conclude that $\bar m^n$ converges to $\bar m = \Phi(m)$ in $\cM$. This proves the continuity of $\Phi$.

 The compactness of $\Phi$ follows from the fact that $F$ takes values in a bounded subset of $PC^{\varsigma}(\Gamma)$ and the compact embedding $\bssob \hookrightarrow PC(\Gamma)$. We can therefore apply Schauder's fixed point theorem to conclude the proof.
\end{proof}
We now briefly discuss uniqueness of the solution to \eqref{eq:MFG} under monotonicity assumptions on the coupling cost.
\begin{thm}\label{thm:uniqueness}
 For all for all $\mf_1$, $\mf_2\in\cP_1(\Gamma)$ with $\mf_i = m_i \leb + \sum_{\varv \in \cV \setminus \partial \cV} \eta_\varv T_\varv[m_i] \delta_{\varv}$, assume one of the following assumptions:
  \begin{enumerate}[label=(\roman*)]
   \item the map $p \mapsto H(x,p)$ is convex and $F$ is strictly monotone, in the sense that
   \[
    \int_\Gamma (F[\mf_1] - F[\mf_2])(\mf_1 - \mf_2)(dx) > 0\quad \textnormal{};
   \]
   \item the map $p \mapsto H(x,p)$ is  strictly convex and $F$ is monotone, in the sense that
   \[
    \int_\Gamma (F[\mf_1] - F[\mf_2])(\mf_1 - \mf_2)(dx) \geq 0\quad \textnormal{for all $\mf_1$, $\mf_2\in\cP_1$}.
   \]
  \end{enumerate}
   Then there exists at most one solution $(u,\rho, m ) \in \cC^{2,\varsigma}(\Gamma)\times \RR  \times     \bssob$ to the MFG system \eqref{eq:MFG}.

\end{thm}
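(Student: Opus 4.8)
The plan is to run a Lasry--Lions monotonicity argument adapted to the sticky setting, the essential new point being that the invariant measures here carry a singular part at the vertices that has to be matched against the generalized Kirchhoff conditions. I would start from two solutions $(u_1,\rho_1,m_1)$ and $(u_2,\rho_2,m_2)$ of \eqref{eq:MFG}, write $\mf_i = m_i\leb + \sum_{\varv\in\cV\setminus\partial\cV}\eta_\varv T_\varv[m_i]\delta_{\varv}$ for $i=1,2$, set $w := u_1-u_2$ (which lies in $\cC^{2,\varsigma}(\Gamma)\subset\cssob$) and $\bar\rho := \rho_1-\rho_2$, and consider the quantity
\[
 \cI := \int_\Gamma\big(F[\mf_1]-F[\mf_2]\big)(\mf_1-\mf_2)(dx) = \int_\Gamma\big(F[\mf_1]-F[\mf_2]\big)(m_1-m_2)\,dx + \sum_{\varv\in\cV\setminus\partial\cV}\eta_\varv\big(T_\varv[m_1]-T_\varv[m_2]\big)\big(F[\mf_1]-F[\mf_2]\big)(\varv),
\]
which by the monotonicity hypothesis satisfies $\cI\ge 0$, and under (i) is $>0$ unless $\mf_1=\mf_2$. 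The goal is to re-express $\cI$ from the equations and show that $\cI\le 0$, so that $\cI=0$ and the two solutions coincide.

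To get the second expression for $\cI$ I would, first, use $w$ as a test function in the weak Fokker--Planck formulation of \cref{defi:weak_FP} (with drift $\partial_p H(\cdot,\partial u_i)$), for $i=1,2$, and subtract the two identities, obtaining
\[
 \int_\Gamma \mu\,\partial w\,\partial(m_1-m_2)\,dx = -\int_\Gamma\big[\partial_p H(x,\partial u_1)\,m_1 - \partial_p H(x,\partial u_2)\,m_2\big]\partial w\,dx .
\]
Second, I would subtract the two Hamilton--Jacobi--Bellman equations of \eqref{eq:MFG} edge by edge, multiply by $m_1-m_2$, integrate over $\Gamma$ and integrate by parts on each edge, producing vertex contributions $-\sum_{\varv}\sum_{\alpha\in\cA_\varv}\mu_\alpha\,\partial_\alpha w(\varv)\,(m_1-m_2)_{|\Gamma_\alpha}(\varv)$. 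Using $(m_i)_{|\Gamma_\alpha}(\varv)=\gamma_{\varv,\alpha}T_\varv[m_i]$, the Neumann condition $\partial_\alpha w(\varv)=0$ at boundary vertices, the Kirchhoff relation $\sum_{\alpha\in\cA_\varv}\mu_\alpha\gamma_{\varv,\alpha}\partial_\alpha w(\varv)=\eta_\varv\big(F[\mf_1](\varv)-F[\mf_2](\varv)-\bar\rho\big)$ coming from \eqref{eq:MFG}, and the mass identity $\int_\Gamma(m_1-m_2)\,dx=-\sum_{\varv}\eta_\varv\big(T_\varv[m_1]-T_\varv[m_2]\big)$, all the $\bar\rho$-terms cancel and, after inserting the previous display and regrouping the $m_1$- and $m_2$-terms, $\cI$ reduces to
\[
 \cI = -\int_\Gamma m_1\Big[H(x,\partial u_2)-H(x,\partial u_1)-\partial_p H(x,\partial u_1)(\partial u_2-\partial u_1)\Big]dx - \int_\Gamma m_2\Big[H(x,\partial u_1)-H(x,\partial u_2)-\partial_p H(x,\partial u_2)(\partial u_1-\partial u_2)\Big]dx .
\]
In both cases (i) and (ii) the map $p\mapsto H(x,p)$ is (at least) convex, so the two bracketed convexity defects are nonnegative; since $m_1,m_2\ge 0$, this gives $\cI\le 0$, hence $\cI=0$ and both integrals vanish.

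It then remains to conclude. Under (i), $\cI=0$ and strict monotonicity of $F$ force $\mf_1=\mf_2$, hence $m_1=m_2$ and $F[\mf_1]=F[\mf_2]$; then $(u_1,\rho_1)$ and $(u_2,\rho_2)$ solve \eqref{eq:HJB} with the same right-hand side and the same normalization $\int_\Gamma u_i\,dx=0$, so $u_1=u_2$ and $\rho_1=\rho_2$ by \cref{thm:HJB_well_posed}. Under (ii), each $m_i$ is strictly positive in the interior of every edge (it is a positive multiple of the function $m^1$ of \cref{lem:FP_theta}), so the vanishing of $\int_\Gamma m_i[\cdots]\,dx$ together with strict convexity of $p\mapsto H(x,p)$ forces $\partial u_1=\partial u_2$ a.e.; thus $\partial w\equiv 0$ on each edge, $w$ is constant by connectedness of $\Gamma$, and $w=0$ by the normalization $\int_\Gamma w\,dx=0$, i.e. $u_1=u_2$. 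Then $m_1$ and $m_2$ solve \eqref{eq:FP} with the same drift $\partial_p H(\cdot,\partial u_1)$, so $m_1=m_2$ by \cref{prop:FP_well_posed}, and subtracting the two Hamilton--Jacobi--Bellman equations gives $\rho_1=\rho_2$.

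I expect the main obstacle to be the bookkeeping of the vertex terms: one must verify carefully that the singular $\delta_{\varv}$-contributions to $\cI$ carried by the measures $\mf_i$ are \emph{exactly} cancelled by the boundary terms generated by the two integrations by parts, once the generalized Kirchhoff conditions (which encode the stickiness coefficients $\eta_\varv$ and the weights $\gamma_{\varv,\alpha}$) and the mass constraint are inserted, and in particular that the $\bar\rho$-contributions cancel — this is the step where the sticky structure really enters. The integrations by parts themselves are unproblematic, being justified by the regularity $u_i\in\cC^{2,\varsigma}(\Gamma)$ and $m_i\in\bssob$, which provide well-defined traces of $\partial_\alpha u_i$ and $m_i$ at the vertices.
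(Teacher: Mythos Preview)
Your proposal is correct and follows essentially the same Lasry--Lions monotonicity argument as the paper: both derive the identity $\int_\Gamma(F[\mf_1]-F[\mf_2])(\mf_1-\mf_2)(dx)=\int_\Gamma\mu\,\partial w\,\partial\bar m+(H(x,\partial u_1)-H(x,\partial u_2))\bar m\,dx$ by integrating the difference of the HJB equations against $\bar m$ and handling the vertex terms via the generalized Kirchhoff conditions and the mass constraint, then subtract the Fokker--Planck identity tested against $w$ to reach the standard convexity-defect expression. Your treatment of case (ii) is in fact more explicit than the paper's (which only says ``we conclude in a similar way''): invoking the strict positivity of $m_i$ from \cref{lem:FP_theta}, the normalization $\int_\Gamma w\,dx=0$, and \cref{prop:FP_well_posed} is the right way to close the argument.
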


\begin{proof}
 Let $(u_1,m_1, \rho_1)$ and $(u_2, m_2, \rho_2)$ be two solutions to \eqref{eq:MFG} and
 define $\mf_i$, $i=1,2$, as in \eqref{eq:mu}. Set $\bar u = u_1 - u_2$,  $\bar \mf = \mf_1 - \mf_2$, $\bar m = m_1 - m_2$ and $\bar \rho = \rho_1 - \rho_2$. \par

 Integrating the equation satisfied by $\bar u$ against $\bar m$ and using the transition condition for $u_1$ and $u_2$, we obtain
 \begin{equation*}
 \begin{split}
  & \int_\Gamma F[\mf_1] - F[\mf_2] \bar m dx = \int_\Gamma \left (- \mu \partial^2 \bar u + H(x,\partial u_1) - H(x, \partial u_2) + \bar \rho \right) \bar m\, dx \\
  & \qquad = \int_\Gamma \mu \partial \bar u \partial \bar m +  \left ( H(x,\partial u_1) - H(x, \partial u_2) \right) \bar m \, dx \\ & \qquad \quad  - \left [ \sum_{\varv \in \cV \setminus \partial \cV} T_\varv[\bar m] \sum_{\alpha \in \cA_\varv} n_{\varv,\alpha} \mu_\alpha \gamma_{\varv, \alpha} \partial_\alpha \bar u(\varv) \right ] + \int_\Gamma \bar \rho \bar m\, dx \\
  & \qquad = \int_\Gamma \mu \partial \bar u \partial \bar m +  \left ( H(x,\partial u_1) - H(x, \partial u_2) \right) \bar m \, dx \\
  & \qquad \quad + \left [\sum_{\varv \in \cV \setminus \partial \cV} \eta_\varv T_\varv[\bar m] \left (\bar \rho - (F[\mf_1](\varv) - F[\mf_2](\varv) \right) \right ] + \int_\Gamma \bar \rho \bar m\, dx \\
  & \qquad = \int_\Gamma \mu \partial \bar u \partial \bar m +  \left ( H(x,\partial u_1) - H(x, \partial u_2) \right) \bar m \, dx  \\
  & \qquad \quad - \left [\sum_{\varv \in \cV \setminus \partial \cV} \eta_\varv T_\varv[\bar m] \left ((F[\mf_1](\varv) - F[\mf_2](\varv) \right) \right ] + \int_\Gamma \bar \rho \bar \mf(dx).
 \end{split}
 \end{equation*}

 Using the fact that $\int_\Gamma \bar \rho \bar \mf(dx) = 0$, we conclude that
 \begin{equation}\label{eq:uniq1}
  \int_\Gamma (F[\mf_1] - F[\mf_2]) \bar \mf (dx) = \int_\Gamma \mu \partial \bar u \partial \bar m +  \left ( H(x,\partial u_1) - H(x, \partial u_2) \right) \bar m \, dx.
 \end{equation}

 On the other hand, using $\bar u$ as a test-function in the equation satisfied by $\bar m$ we obtain
 \begin{equation}\label{eq:uniq2} 	 
  \int_\Gamma \mu \partial \bar m \partial \bar u + \left (m_1 \partial_p H(x,\partial u_1) - m_2 \partial_p H(x,\partial u_2) \right) \partial \bar u = 0.
 \end{equation}
 
 The rest of the proof follows the usual argument introduced in \cite{LL2007}. Indeed, subtracting \eqref{eq:uniq2} from \eqref{eq:uniq1}, we get
 \begin{align*}
 	0&=\int_\Gamma (F[\mf_1] - F[\mf_2]) \bar \mf (dx)\\
 	&+\int_\Gamma m_1[H(x,\partial u_2)-H(x,\partial u_1)-\partial_p H(x,\partial u_1)\partial \bar u]dx\\
 	&+\int_\Gamma m_2[H(x,\partial u_1)-H(x,\partial u_2)-\partial_p H(x,\partial u_2)\partial \bar u]dx.
 \end{align*}
Assuming that $F$ is strictly monotone, the first integral is non negative. By convexity of $H$ and positivity of $m_i$, the other two integrals are non negative. It follows that 
 $\mf_1=\mf_2$ and also $u_1=u_2$. Recalling  \eqref{eq:mu}, we also get $m_1=m_2$. If $F$ is   monotone, we conclude in a similar way.
\end{proof}
 \begin{ex}
 	We give an example of coupling cost satisfying the assumptions of Theorem \ref{thm:uniqueness}. Given   $\mf=m \leb + \sum_{\varv \in \cV \setminus \partial \cV} \eta_\varv T_\varv[m] \delta_{\varv}$  we define
 	\begin{equation*}
 		F[\mf](x)=
 		\begin{cases}
 			F_I[m](x)\qquad& x\in\Gamma\setminus \cV,\\[4pt]
 			F_V(T_\varv[m])  &x \in  \cV\setminus\partial\cV,
 		\end{cases}
 	\end{equation*}
 	where
\[\int_\Gamma (F_I[m_1] - F_I[m_2])(m_1 - m_2)(dx) > 0\] 	
for $m_i\in  L^1(\Gamma)$, $m_i\ge 0$, $i=1,2$, and $F_V:\RR^+\to\RR$ is an increasing function.
 \end{ex}

\printbibliography

\end{document}